\author{Joshua E. Rooney}
\date{May 28, 2025} 
\title{Winning Probabilities of Balanced and Nontransitive $n$-tuples of Dice}
\newtheorem{theorem}{Theorem}[section]
\newtheorem{problem}[theorem]{Problem}
\newtheorem{lemma}[theorem]{Lemma}
\theoremstyle{definition}
\newtheorem{defn}[theorem]{Definition}
\newtheorem{observation}[theorem]{Observation}
\newtheorem*{remark}{Remark}
\begin{document}

\maketitle

\begin{abstract}
    For a positive integer $n$, an $n$-tuple of dice $(A_1,A_2,\dots,A_n)$ is called \emph{balanced} if $P(A_1<A_2) = P(A_2<A_3) = \cdots = P(A_n<A_1)$ and \emph{nontransitive} if $P(A_1<A_2), P(A_2<A_3), \dots, P(A_n<A_1)$ are each greater than $\frac{1}{2}$. For a balanced and nontransitive $n$-tuple of dice $(A_1,A_2,\dots,A_n)$, we define the winning probability $w(A_1,A_2,\dots,A_n) := P(A_1 < A_2)$. The works of Trybula and Kim et al. together show that for a balanced and nontransitve triple of dice $(A_1,A_2,A_3)$, the least upper bound on the winning probability is $\frac{-1+\sqrt{5}}{2}$. Kim et al. then asked what the least upper bound on the winning probability was for the $n \geq 4$ cases. Bogdanov and Komisarski independently have shown that for $n\geq 3$ and a balanced and nontransitive $n$-tuple of dice $(A_1,A_2,\dots,A_n)$, the winning probability is less than $\pi_n := 1-\frac{1}{4\cos^2\left( \frac{\pi}{n+2} \right)}$.
    
    In this paper, we will show that for $n \geq 3$ and every rational $p \in  \left( \frac{1}{2}, \pi_n \right]$, there exists a balanced and nontransitive $n$-tuple of dice with winning probability $p$. Paired with Bogdanov and Komisarski's results, this fully answers the problem posed by Kim et al. and establishes a complete characterization of the winning probabilities for nontransitive and balanced $n$-tuples of dice.
\end{abstract}

\section{Introduction}

A die $A_1$ is a discrete uniform random variable with finite support in the positive integers. For dice $A_1$ and $A_2$, let $P(A_1<A_2)$ be the probability that $A_2$ is greater than $A_1$.
\begin{defn}
    For an integer $n \geq 3$, an $n$-tuple of dice $(A_1,A_2,\dots, A_n)$ is called
    \begin{itemize}
        \item \emph{balanced} if $P(A_1 < A_2) = P(A_2 < A_3) = \cdots = P(A_n < A_1)$;
        \item \emph{nontransitive} if, for each $i \in \{ 1,\dots,n-1\}$, we have $P(A_i < A_{i+1}) > \frac{1}{2}$, and $P(A_n < A_1) > \frac{1}{2}$.
    \end{itemize}
\end{defn}
Popularized by Martin Gardner \cite{Gardner} in 1970, the Efron dice shown in Figure \ref{EfronDice} are a classic example of a balanced and nontransitive $4$-tuple of dice $(A_1,A_2,A_3,A_4)$.
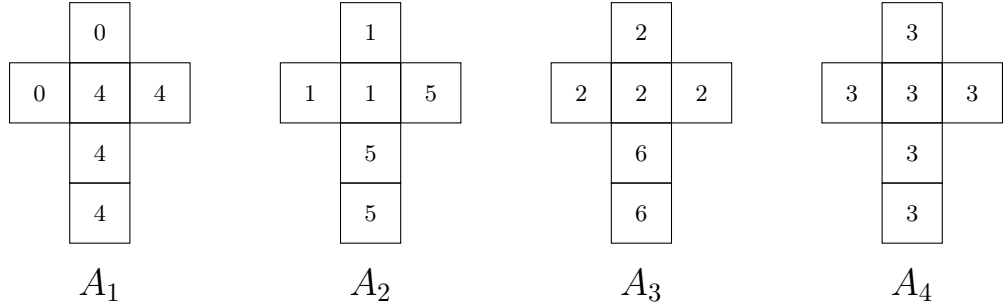
\begin{figure}[ht]
\centering
\begin{tikzpicture}[every node/.style={draw, minimum size=0.8cm, font=\footnotesize}, scale=0.8]

\node at (0,0) {4};
\node at (0,1) {0};
\node at (0,-1) {4};
\node at (-1,0) {0};
\node at (1,0) {4};
\node at (0,-2) {4};
\node[draw=none, font=\Large\bfseries] at (0,-3.2) {$A_1$};

\begin{scope}[xshift=4.5cm]
  \node at (0,0) {1};
  \node at (0,1) {1};
  \node at (0,-1) {5};
  \node at (-1,0) {1};
  \node at (1,0) {5};
  \node at (0,-2) {5};
  \node[draw=none, font=\Large\bfseries] at (0,-3.2) {$A_2$};
\end{scope}

\begin{scope}[xshift=9cm]
  \node at (0,0) {2};
  \node at (0,1) {2};
  \node at (0,-1) {6};
  \node at (-1,0) {2};
  \node at (1,0) {2};
  \node at (0,-2) {6};
  \node[draw=none, font=\Large\bfseries] at (0,-3.2) {$A_3$};
\end{scope}

\begin{scope}[xshift=13.5cm]
  \node at (0,0) {3};
  \node at (0,1) {3};
  \node at (0,-1) {3};
  \node at (-1,0) {3};
  \node at (1,0) {3};
  \node at (0,-2) {3};
  \node[draw=none, font=\Large\bfseries] at (0,-3.2) {$A_4$};
\end{scope}

\end{tikzpicture}

\caption{Efron Dice; $P(A_1 < A_2) = P(A_2 < A_3) = P(A_3 < A_4) = P(A_4 < A_1) = \frac{2}{3}$} \label{EfronDice}
\end{figure}

Adopting terminology from \cite{Kim2025Balanced4Dice}, for a balanced and nontransitive (BN) $n$-tuple of dice \\$(A_1,A_2,\dots,A_n)$, we define the \emph{winning probability}
$$w(A_1,A_2,\dots,A_n) := P(A_1,<A_2) = P(A_2<A_3) = \cdots = P(A_n < A_1) > \frac{1}{2},$$
which is well-defined because this equality is assumed in the balanced case. 

Building on the $n=3$ case resolved by D. Kim, R. Kim, Lee, Lim, and So \cite{KKLLS2024} and Trybula \cite{Trybula61}, this paper proves the following result on the possible winning probabilities for a BN $n$-tuple of dice.
\begin{theorem} \label{mainThm}
    Define
    \begin{equation}\label{Pi_nDef}
        \pi_n := 1-\frac{1}{4\cos^2\left( \frac{\pi}{n+2} \right)}.
    \end{equation}
    Then, for every rational $p \in \left( \frac{1}{2}, \pi_n \right]$, there exists a positive integer $m$ and a BN $n$-tuple of dice $(A_1,A_2,\dots, A_n)$, each die with $m$ sides, such that
    $$w(A_1,A_2,\dots,A_n) = p.$$
\end{theorem}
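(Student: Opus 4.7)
The plan is to prove Theorem \ref{mainThm} constructively: for each rational $p\in(\tfrac{1}{2},\pi_n]$, exhibit a BN $n$-tuple of dice, all with the same number of sides, whose winning probability is exactly $p$. I would rely on two tools: a \emph{concatenation} operation that combines two BN $n$-tuples into a new one, and a family of \emph{base} BN $n$-tuples whose winning probabilities cluster near $\pi_n$.

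For the concatenation, given BN $n$-tuples $(A_1,\dots,A_n)$ and $(B_1,\dots,B_n)$ with $m_A$- and $m_B$-sided dice and winning probabilities $p_A,p_B$, shift the faces of each $B_i$ strictly above those of each $A_j$, and form the $(m_A+m_B)$-sided die $C_i$ whose faces are the multiset union of those of $A_i$ and $B_i$. A direct count partitioned by which block each face lands in yields
\[
    w(C_1,\dots,C_n) \;=\; \frac{m_A^2\, p_A \,+\, m_A m_B \,+\, m_B^2\, p_B}{(m_A+m_B)^2},
\]
and both the balanced and nontransitive properties are preserved (the latter because $m_A^2(2p_A-1)+m_B^2(2p_B-1)>0$). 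The same formula applies when one ingredient is merely balanced and not nontransitive (e.g., $n$ identical uniform dice, where $p_B=(m_B-1)/(2m_B)$), letting $w(C)$ sweep rationals densely throughout $(\tfrac{1}{2}, p_A)$ as $m_A, m_B$ vary.

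For the base family, I would exploit the structure underlying the Bogdanov--Komisarski bound, where $\pi_n$ arises from the Perron eigenvalue $2\cos(\pi/(n+2))$ of a tridiagonal path matrix on $n+1$ nodes. Taking the face-multiplicity vectors of the dice to be rational approximations to the Perron eigenvector should produce explicit BN $n$-tuples whose winning probabilities approach $\pi_n$ from below as the denominators grow, and equal $\pi_n$ exactly when $n=4$, recovering Efron's dice. Together with the concatenation this gives, in principle, a rich stock of realizable rational winning probabilities.

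The remaining and, I expect, most delicate step is to show that combining the base family with the concatenation formula realizes \emph{every} rational in $(\tfrac{1}{2},\pi_n]$, not only a dense subset. I anticipate this will require an inductive argument on the denominator of $p$ together with careful endpoint analysis --- especially near $\pi_n$, where the rational approximations to the Perron eigenvector must be fine enough to leave no gaps, and near $\tfrac{1}{2}$, where the divisibility conditions relating the denominator of $p$ to $(m_A+m_B)^2$ become tight. This number-theoretic exactness problem, rather than the construction itself, is what I expect to be the main obstacle.
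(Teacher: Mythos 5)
Your concatenation formula
\[
w(C_1,\dots,C_n) = \frac{m_A^2 p_A + m_A m_B + m_B^2 p_B}{(m_A+m_B)^2}
\]
is correct, and the observation that it preserves balance (and preserves nontransitivity whenever $m_A^2(2p_A-1) + m_B^2(2p_B-1) > 0$) is sound. In fact, concatenation is exactly the special case of the paper's ``central word'' machinery where the word for the combined tuple is the literal juxtaposition $\sigma_A \sigma_B$; the paper's Lemma \ref{mainLem1} generalizes this by allowing the blocks to interleave, which is precisely where the extra degrees of freedom come from.

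That said, there is a genuine gap, and you have correctly identified where it is: you never resolve the exactness problem, and I do not think the sketched path (induction on the denominator, refining the approximation of the Perron eigenvector) will close it. Two concrete obstacles. First, your ``base family near $\pi_n$'' is not constructed: ``should produce explicit BN $n$-tuples'' is a hope, not a proof, and for $n \ne 4$ the endpoint $\pi_n$ is irrational, so you cannot anchor the family at $\pi_n$ itself --- you need actual BN tuples with rational winning probabilities arbitrarily close to $\pi_n$, which is a nontrivial sub-claim. Second, there is a circularity in using concatenation to hit a target rational $p$: for a fixed rational ingredient $p_A$, the map $(m_A,m_B) \mapsto w(C)$ has image a thin set of rationals whose denominators are constrained by $(m_A+m_B)^2$, so realizing an \emph{arbitrary} $p$ requires a rich supply of realizable $p_A$ to start from --- which is precisely the theorem you are trying to prove. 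The paper sidesteps both issues entirely by not iterating: it fixes $m$ large enough that $mp, mp_2,\dots,mp_k$ are all integers, then builds a single central word in one shot whose counts $N_\sigma(A_i < A_{i+1})$ all equal $pm^2$ exactly. The flexibility needed to make those counts land on the nose comes from Lemma \ref{mainLem1} (arbitrary interleaving within the central-word structure), and the existence of suitable rationals $p_2,\dots,p_k$ comes from a continuity-plus-density argument around Komisarski's extremal values $p_j^*$ (Lemma \ref{RationalExistenceLem}). So where your approach would need a delicate number-theoretic bootstrapping step at the end, the paper's direct parametrization makes exactness automatic. If you want to salvage your route, you would need to (i) explicitly construct the base family with rational winning probabilities accumulating at $\pi_n$, and (ii) replace the iterated concatenation with a one-shot interleaving argument --- at which point you would essentially have rediscovered the paper's proof.

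Separately, note that the $p = \pi_n$ endpoint forces $n = 4$ (since $\pi_n \in \mathbb{Q}$ only when $\cos(2\pi/(n+2)) \in \mathbb{Q}$, i.e.\ $n+2 \in \{4,6\}$), which must be handled on its own rather than absorbed into ``recovering Efron's dice.''
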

Paired with Theorem \ref{Komisarski} (stated below), this fully answers Problem \ref{KKLLSProb} (stated below) and establishes a complete characterization of the winning probabilities for nontransitive and balanced $n$-tuples of dice.

In 1961, Stanisław Trybula \cite{Trybula61} found that the winning probability for a BN triple of dice was at most $\frac{-1+\sqrt{5}}{2}$. In 2017, Schaefer and Schweig \cite{SchaeferSchweig2017} asked whether a better bound could be found. Hur and Y. Kim \cite{HurKim} then conjectured that this upper bound could be reduced to $\frac{1}{2} + \frac{1}{9}$. In 2024, D. Kim, R. Kim, Lee, Lim, and So \cite{KKLLS2024} disproved this conjecture with the following theorem, fully classifying the winning probabilities for BN triples of dice.
\begin{theorem}\label{KKLLSThm}
    \cite{KKLLS2024} For each rational $p \in \left( \frac{1}{2}, \frac{-1+\sqrt{5}}{2} \right]$, there exists a BN triple of dice with winning probability $p$.
\end{theorem}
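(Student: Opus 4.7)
The plan is to prove Theorem \ref{mainThm} by explicit construction: given a rational $p \in (1/2, \pi_n]$, exhibit a BN $n$-tuple of dice with winning probability exactly $p$. Since $\pi_n$ is typically irrational (as already seen for $n=3$, where $\pi_3 = (\sqrt{5}-1)/2$), the real content is to realize every rational in the open interval $(1/2,\pi_n)$. I would structure the proof in two stages: first, a near-extremal construction producing a sequence of rational winning probabilities converging to $\pi_n$ from below; second, a ``dilution'' or interpolation step that fills in every rational in the gap down to $1/2$.

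For the near-extremal stage, the appearance of $\cos(\pi/(n+2))$ in $\pi_n$ is not coincidental: $2\cos(\pi/(n+2))$ is the Perron eigenvalue of the path graph $P_n$, and the corresponding eigenvector has entries $\sin(k\pi/(n+2))$ for $k = 1,\dots,n$. This strongly suggests parameterizing the face values of $(A_1,\dots,A_n)$ by integer sequences satisfying a Chebyshev-type recurrence $a_{k+1} = c\, a_k - a_{k-1}$, whose ratios approximate these sines as the parameter grows. Concretely, I would look for an integer-valued family $(A_1^{(N)},\dots,A_n^{(N)})$ indexed by $N$, where the face multisets are built from blocks whose lengths come from such a recurrence; compute $P(A_i^{(N)} < A_{i+1}^{(N)})$ by a direct count; and verify that all $n$ cyclic probabilities coincide (balance) and exceed $1/2$ (nontransitivity). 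The computation should produce a rational expression in $N$ that tends to $\pi_n$.

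For the interpolation stage, once a BN $n$-tuple with winning probability $p^{*}$ is available for some $p^{*}$ arbitrarily close to $\pi_n$, I would use two standard moves that preserve the balanced/nontransitive structure. The first is to concatenate the face multisets of $(A_1,\dots,A_n)$ with those of a second BN $n$-tuple, yielding a new BN $n$-tuple whose winning probability is a convex combination of the two originals, with weights determined by the ratio of side counts. The second is to append a block of identical common faces to all $n$ dice, which leaves every $P(A_i < A_{i+1})$ unchanged but multiplies the accessible denominators, allowing arbitrary rational targets. Iterating the first move with a near-$1/2$ BN $n$-tuple (e.g.\ a tiny perturbation of $n$ identical dice) dilutes the winning probability downward through rationals; tuning the side counts hits exactly the desired $p$.

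The main obstacle is the first stage: constructing the near-extremal family and carrying out the combinatorial verification that all $n$ adjacent win-probabilities are equal. The balance constraint couples all $n$ dice simultaneously, and closed-form evaluation of the winning probability will likely rely on product identities for $\sin(k\pi/(n+2))$ or on determinantal identities involving $U_n$. A secondary subtlety is that the interpolation step must produce \emph{exactly} the target rational $p$ rather than merely approximating it; controlling the denominator forces $m$ (the common number of sides) to depend on the denominator of $p$, and one must check that the final $n$-tuple remains nontransitive after dilution, i.e.\ that the convex combination of winning probabilities does not drop below $1/2$ unless that is itself the target.
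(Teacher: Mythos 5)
Your proposal addresses Theorem \ref{mainThm} (of which Theorem \ref{KKLLSThm} is the $n=3$ special case, since $\pi_3 = \frac{-1+\sqrt{5}}{2}$), and it takes a genuinely different route from the paper. The paper never builds near-extremal dice and then dilutes. Instead it works directly with central words: Lemma \ref{mainLem1} shows that one can realize any prescribed counts $s_i = N_\sigma(A_i < A_{i+1})$ within box constraints determined by the type $(a_1,\dots,a_{n-1})$, and that type is chosen from Komisarski's parameters $p_j^* = \frac{\sin[(j+2)\alpha]}{2\sin[(j+1)\alpha]\cos\alpha}$ with $\alpha = \frac{\pi}{n+2}$, perturbed to rationals by a continuity-and-density argument (Lemma \ref{RationalExistenceLem}). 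Setting $a_1 = (1-p)m$ and $a_{n-1} = pm$ makes the central structure automatically produce the remaining two cyclic counts $N_\sigma(A_{n-1} < A_n) = N_\sigma(A_n < A_1) = pm^2$, so balance is built in rather than verified after the fact. Your instinct that $2\cos(\pi/(n+2))$ is the Perron eigenvalue of the path $P_n$ and that $\sin(k\pi/(n+2))$ should parametrize the construction is exactly the mechanism behind the $p_j^*$, so your stage one is aimed at the right object; but the paper only needs the $p_j^*$ as real anchors for the density argument, never as an explicit dice family.

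The concrete gaps are in your stage two. First, stacking the face multisets of two BN $n$-tuples (all of $B$'s faces above all of $A$'s) does \emph{not} give a convex combination of the two winning probabilities: with side counts $m, m'$ and winning probabilities $p_A, p_B$, the result is $\frac{m^2 p_A + m'^2 p_B + mm'}{(m+m')^2}$, which carries the cross term $mm'$ because every $(A\text{-face}, B\text{-face})$ pair is a strict win. Second, and more seriously, hitting an \emph{exact} rational target $p$ is an arithmetic constraint, not a tuning problem. With $p_B = \frac{1}{2}$ and $t = m'/m$, the equation $\frac{p_A + t^2/2 + t}{(1+t)^2} = p$ is the quadratic $(p-\tfrac12)t^2 + (2p-1)t + (p - p_A) = 0$, whose discriminant is $4(p-\tfrac12)(p_A - \tfrac12)$; a rational $t$ exists only when $(p-\tfrac12)(p_A-\tfrac12)$ is the square of a rational, which fails for generic $p$ and $p_A$. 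You flag this as a ``secondary subtlety,'' but it is the crux: without a mechanism to solve the Diophantine condition, the dilution only reaches a measure-zero set of rationals that is moreover not under your control. The paper's reduction to the integer equations $N_\sigma(A_i < A_{i+1}) = pm^2$, solvable for any sufficiently divisible $m$ by Lemma \ref{mainLem1}, sidesteps both problems at once, which is the real advantage of the central-word approach over the construct-and-interpolate approach.
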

Further, D. Kim, R. Kim, Lee, Lim, and So \cite{KKLLS2024} posed the following extension to their work.
\begin{problem}\label{KKLLSProb}
    \cite{KKLLS2024} For an integer $n \geq 4$, if $(A_1,A_2,\dots,A_n)$ is a BN $n$-tuple of dice, what is the least upper bound of the winning probability $w(A_1,A_2,\dots,A_n)$?
\end{problem}

In 1965, Trybula \cite{Trybula65} classified an upper bound on the winning probability of a BN $n$-tuple of dice implicitly. Then, in 2010, Bogdanov \cite{Bogdanov2010} found a closed-form expression for an upper bound on this winning probability, published in Russian. Further, in 2021, Andrzej Komisarski \cite{Komisarski2021} confirmed the result of Bogdanov using a distinct approach, which is interestingly geometric.
\begin{theorem}\label{Komisarski}
    \cite{Bogdanov2010, Komisarski2021} Let $(A_1,A_2,\dots, A_n)$ be a BN $n$-tuple of dice. Then, $w(A_1,A_2,\dots,A_n) \leq \pi_n$.
\end{theorem}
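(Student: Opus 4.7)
The closed form $\pi_n = 1 - \bigl(4\cos^2(\pi/(n+2))\bigr)^{-1}$ features the quantity $2\cos(\pi/(n+2))$, which is precisely the Perron eigenvalue of the adjacency matrix of the path graph $P_{n+1}$, with Perron eigenvector having $j$-th entry $\sin(j\pi/(n+2))$. This spectral signature strongly suggests the proof reduces to a tridiagonal eigenvalue inequality, and I would pursue an LP-duality/convex-analytic route in the spirit of Komisarski's geometric argument.

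\textbf{Encoding and dual weights.} First, pass to dice on a common finite support $\{1,\dots,N\}$ with pmf vectors $\mathbf{p}_i \in \mathbb{R}^{N}$, which is automatic after refining the supports. Writing $L_{jk}=\mathbf{1}[j<k]$, the balance assumption becomes the cyclic system $\mathbf{p}_i^{\,T} L\, \mathbf{p}_{i+1} = w$ for $i=1,\dots,n$ (indices mod $n$). Attach positive dual weights $c_1,\dots,c_n$ to the $n$ balance equalities and combine:
$$ w\sum_i c_i \;=\; \sum_i c_i\, \mathbf{p}_i^{\,T} L\, \mathbf{p}_{i+1} \;=\; E\!\left[\,\sum_i c_i \,\mathbf{1}[A_i<A_{i+1}]\,\right]. $$
The aim is to choose $c_i$ so that the rightmost quantity is bounded by $\pi_n\sum_i c_i$. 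Motivated by the spectral hint, I would take $c_i \propto \sin(i\pi/(n+2))$ and organize the estimate using the Chebyshev recursion $\sin((i-1)\theta)+\sin((i+1)\theta)=2\cos\theta\,\sin(i\theta)$ with $\theta=\pi/(n+2)$.

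\textbf{Pointwise versus quadratic inequality.} For any realization $(a_1,\dots,a_n)$, at least one cyclic position fails to be a strict ascent (otherwise $a_1<\cdots<a_n<a_1$), so the indicator sum is supported on a proper subset of $[n]$. The naive combinatorial bound $\sum_{i\in S}c_i\leq \sum_i c_i-\min_i c_i$ yields only $(n-1)/n$, which is strictly weaker than $\pi_n$ for every $n\geq 3$. The improvement must exploit the product structure of independent dice. I would seek a quadratic upgrade, producing an inequality of the form
$$ \sum_i c_i\bigl(P(A_i<A_{i+1})-w\bigr) \;+\; \lambda\sum_i c_i c_{i+1}\,\Psi(\mathbf{p}_i,\mathbf{p}_{i+1}) \;\geq\; 0 $$
for a suitable nonnegative bilinear quantity $\Psi$ (plausibly built from the tie probabilities $P(A_i=A_{i+1})$ or from the survival vectors $L\mathbf{p}_i$), and showing the optimal scalar $\lambda$ matches $2\cos(\pi/(n+2))$ via the tridiagonal eigenvalue equation of $P_{n+1}$.

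\textbf{Main obstacle.} The heart of the argument is the last step: replacing the purely combinatorial cyclic constraint---which only yields $(n-1)/n$---by one that actually feels the independence structure of the dice. Identifying the correct quadratic witness $\Psi$ so that the $P_{n+1}$ spectral radius naturally emerges is the most delicate part, and is presumably where Komisarski's convex-geometric identification of the extremal joint distribution of $(A_1,\dots,A_n)$, or Bogdanov's algebraic manipulation, enters decisively. Once such a $\Psi$ and the sinusoidal weights $c_i$ are aligned with the Perron eigenvector of $P_{n+1}$, summing the Chebyshev recursion around the cycle should yield $w\sum c_i \leq \pi_n \sum c_i$ and hence $w\leq \pi_n$.
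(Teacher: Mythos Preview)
The paper does not actually prove this theorem; it merely cites it from Bogdanov and Komisarski and then uses it as a black box to complement Theorem~\ref{mainThm}. So there is no ``paper's own proof'' to compare against here.

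Your proposal is a heuristic outline rather than a proof, and you are candid about that. The spectral observation is correct and relevant: the quantity $2\cos(\pi/(n+2))$ is indeed the Perron eigenvalue of the path $P_{n+1}$, and the sinusoidal weights $c_i=\sin(i\pi/(n+2))$ do appear in Komisarski's argument (and implicitly in Bogdanov's). However, your write-up has a genuine gap exactly where you flag it. The linear-programming step with weights $c_i$ applied to the indicators $\mathbf{1}[A_i<A_{i+1}]$ can only reach $1-\min_i c_i/\sum_i c_i$, which you correctly note is too weak. You then posit an unspecified quadratic witness $\Psi$ and hope it will make the tridiagonal eigenvalue appear, but you do not propose what $\Psi$ is or why such an inequality should hold. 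That is the entire content of the theorem: Komisarski's proof works not by finding a clever bilinear $\Psi$ on the pmf vectors, but by parametrizing the extremal distributions via a certain simplex map and reducing the cyclic constraint $P(A_i<A_{i+1})=w$ to a recursion on scalar parameters, whose solvability boundary is governed by the Chebyshev recursion. Bogdanov's proof proceeds via an explicit nonlinear change of variables on the distribution functions that linearizes the problem. In neither case does the argument take the ``dual weights plus quadratic slack'' shape you sketch, and I do not see how to fill in your $\Psi$ without essentially redoing one of those arguments.

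In short: the direction is plausible and the spectral intuition is right, but as written this is a plan with its key lemma missing, not a proof.
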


In this paper, we will extend the result of D. Kim, R. Kim, Lee, Lim, and So to $n \geq 4$ dice with Theorem \ref{mainThm}. Together with Theorem \ref{Komisarski}, this will fully classify the possible winning probabilities for a balanced and nontransitive $n$-tuple of dice, answering Problem \ref{KKLLSProb}.

We will proceed as follows. In Section \ref{Preliminaries}, we introduce preliminaries and set conventions. In Section \ref{MainProof}, we will prove Theorem \ref{mainThm}. Finally, in Section \ref{LemmaProofSection}, we will prove Lemmas \ref{mainLem1} and \ref{RationalExistenceLem} used in the proof of Theorem $\ref{mainThm}$, including the statement and proof of Lemmas \ref{L(*)<1/2} and \ref{U(*)=Pi_n}.

\section{Preliminaries}\label{Preliminaries}

Violating the transitive property, the mere existence of nontransitive $n$-tuples of dice is fascinating. As a result, sets of nontransitive dice have been heavily studied, see \cite{AngelDavis,BGH,kronenthal2009tied, savage1994paradox} and further sources therein.

Then came the study of balanced $n$-tuples of dice. First seen in \cite{SchaeferSchweig2017}, Schaefer and Schweig looked at $n$-tuples of dice that were both balanced and nontransitive. Since then, balanced and nontransitive dice have been further studied in \cite{HurKim, joyce2020strongly, schaefer2011balanced, SchaeferSolo2017}, with particular focus on triples and $4$-tuples of dice.

Although Efron dice were popularized in 1970, Hugo Steinhaus and Stanislaw Trybula \cite{SteinhausTrybula1959} had been studying nontransitive random variables since 1959. Following conventions set by Steinhaus and Trybula, we shall use distinct, consecutive positive integers on each face across all dice, starting from $1$. Under these conventions, the Efron dice shown in Figure \ref{EfronDice} would become the dice shown in Figure \ref{ModEfronDice}. Notice that since the relative ordering of the dice is maintained, $P(A_1<A_2), P(A_2<A_3),P(A_3<A_4), \text{ and } P(A_4<A_1)$ remain the same.

\begin{figure}[ht]
\centering
\begin{tikzpicture}[every node/.style={draw, minimum size=0.8cm, font=\footnotesize}, scale=0.8]

\node at (0,0) {16};
\node at (0,1) {1};
\node at (0,-1) {18};
\node at (-1,0) {2};
\node at (1,0) {17};
\node at (0,-2) {19};
\node[draw=none, font=\Large\bfseries] at (0,-3.2) {$A_1$};

\begin{scope}[xshift=4.5cm]
  \node at (0,0) {5};
  \node at (0,1) {3};
  \node at (0,-1) {21};
  \node at (-1,0) {4};
  \node at (1,0) {20};
  \node at (0,-2) {22};
  \node[draw=none, font=\Large\bfseries] at (0,-3.2) {$A_2$};
\end{scope}

\begin{scope}[xshift=9cm]
  \node at (0,0) {8};
  \node at (0,1) {6};
  \node at (0,-1) {23};
  \node at (-1,0) {7};
  \node at (1,0) {9};
  \node at (0,-2) {24};
  \node[draw=none, font=\Large\bfseries] at (0,-3.2) {$A_3$};
\end{scope}

\begin{scope}[xshift=13.5cm]
  \node at (0,0) {12};
  \node at (0,1) {10};
  \node at (0,-1) {14};
  \node at (-1,0) {11};
  \node at (1,0) {13};
  \node at (0,-2) {15};
  \node[draw=none, font=\Large\bfseries] at (0,-3.2) {$A_4$};
\end{scope}

\end{tikzpicture}

\caption{Modified Efron Dice}\label{ModEfronDice}
\end{figure}
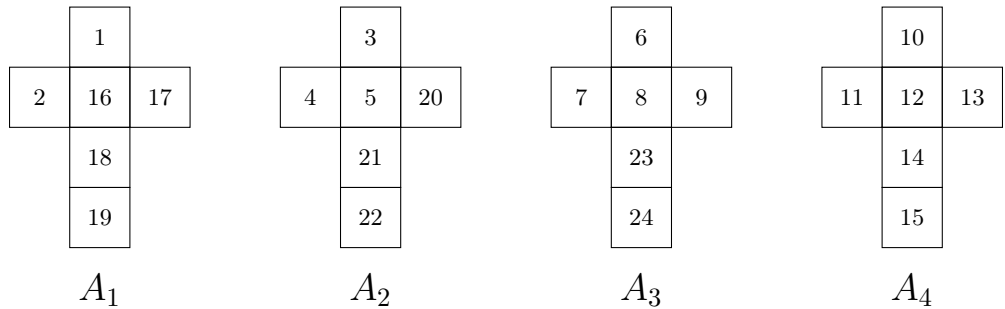

Under these conventions, it becomes clear that relative ordering is truly what determines $P(A_1<A_2),\dots,P(A_n<A_1)$. Say, for example, that we doubled every integer in Figure \ref{ModEfronDice}, $P(A_1<A_2),P(A_2<A_3), P(A_3<A_4), \text{ and } P(A_4<A_1)$ would remain the same. Accordingly, for ease, we represent sets of dice as words.

\begin{defn}
    For an $n$-tuple of dice ${\cal O} = (A_1,A_2, \dots, A_n)$, a \emph{word} $\sigma = \sigma(\cal O)$ is a sequence of letters, each of which is $A_1,\dots, A_{n-1}$ or $A_n$ such that the $i$th letter in the word is the die that contains the number $i$. Further, $\sigma$ is a $(m_1,\dots,m_n)$-word if $A_i$ has $m_i$ sides for all $i \in \{1,\dots,n \}$.
\end{defn}

For example, using the triple of dice in Figure \ref{1stBalNontranEx},
$$\sigma(A_1,A_2,A_3) = A_3A_3A_1A_1A_2A_2A_2A_2A_3A_3A_1A_1A_1A_1A_2A_2A_3A_3 = A_3^2A_1^2A_2^4A_3^2A_1^4A_2^2A_3^2.$$
Similarly, for the triple of dice in Figure \ref{2ndBalNontranEx},
$$\sigma(A_1,A_2,A_3) = A_1A_2A_1A_2^2A_3^4A_1^3A_2^2.$$

\begin{figure}[ht]
\centering
\begin{tikzpicture}[every node/.style={draw, minimum size=0.8cm, font=\footnotesize}, scale=0.8]

\node at (0,0) {11};
\node at (0,1) {3};
\node at (0,-1) {13};
\node at (-1,0) {4};
\node at (1,0) {12};
\node at (0,-2) {14};
\node[draw=none, font=\Large\bfseries] at (0,-3.2) {$A_1$};

\begin{scope}[xshift=5.5cm]
  \node at (0,0) {7};
  \node at (0,1) {5};
  \node at (0,-1) {15};
  \node at (-1,0) {6};
  \node at (1,0) {8};
  \node at (0,-2) {16};
  \node[draw=none, font=\Large\bfseries] at (0,-3.2) {$A_2$};
\end{scope}

\begin{scope}[xshift=11cm]
  \node at (0,0) {9};
  \node at (0,1) {1};
  \node at (0,-1) {17};
  \node at (-1,0) {2};
  \node at (1,0) {10};
  \node at (0,-2) {18};
  \node[draw=none, font=\Large\bfseries] at (0,-3.2) {$A_3$};
\end{scope}

\end{tikzpicture}

\caption{A triple of BN 6-sided dice with winning probability $\frac{5}{9}$}\label{1stBalNontranEx}
\end{figure}
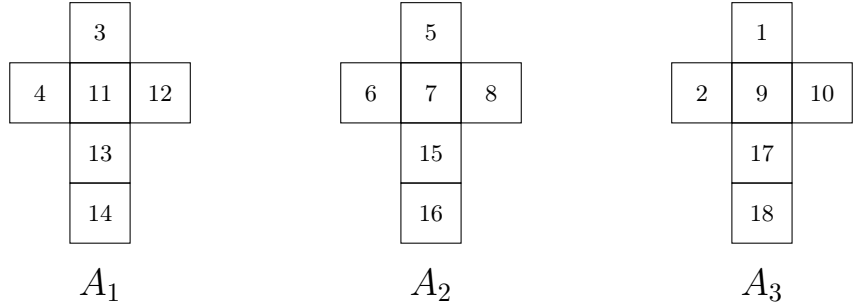

\begin{figure}[ht]
\centering
\begin{tikzpicture}[every node/.style={draw, minimum size=0.8cm, font=\footnotesize}, scale=0.8]

\node at (0,0) {10};
\node at (0,1) {1};
\node at (0,-1) {12};
\node at (-1,0) {3};
\node at (1,0) {11};
\node[draw=none, font=\Large\bfseries] at (0,-2.2) {$A_1$};

\begin{scope}[xshift=5cm]
  \node at (0,0) {5};
  \node at (0,1) {2};
  \node at (0,-1) {14};
  \node at (-1,0) {4};
  \node at (1,0) {13};
  \node[draw=none, font=\Large\bfseries] at (0,-2.2) {$A_2$};
\end{scope}

\begin{scope}[xshift=10cm]
  \node at (0,0) {8};
  \node at (0,1) {6};
  \node at (-1,0) {7};
  \node at (1,0) {9};
  \node[draw=none, font=\Large\bfseries] at (0,-2.2) {$A_3$};
\end{scope}

\end{tikzpicture}

\caption{A triple of BN dice with winning probability $\frac{3}{5}$}\label{2ndBalNontranEx}
\end{figure}
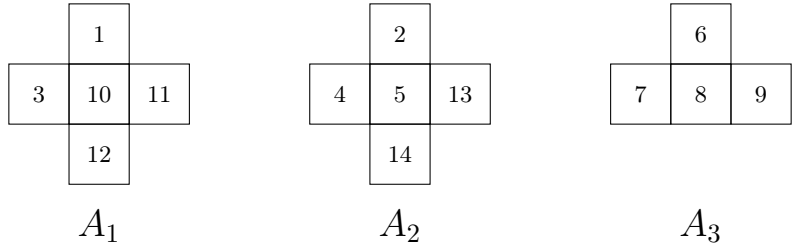

We will write $\sigma({\cal O})$ as merely $\sigma$ if the relevant ordered collection of dice is clearly implied. Naturally, we will call a word $\sigma({\cal O})$ \emph{balanced} (resp. \emph{nontransitive}) if and only if the ordered collection of dice ${\cal O}$ is \emph{balanced} (resp. \emph{nontransitive}).

Further, we will make significant use of something we will call central words. For an $n$-tuple of dice $(A_1,A_2,\dots,A_n)$, let $m_i$ be the number of sides on $A_i$, or, more precisely, the cardinality of the support of $A_i$. Then, we denote the length of the word $\sigma = \sigma(A_1,A_2,\dots,A_n)$ by $|\sigma| = m_1+m_2 + \cdots + m_n$. Now, we define a central word.

\begin{defn}
    A $(m_1,m_2,\dots,m_n)$-word $\sigma$ is \emph{central} if there exists positive integers $i,j$ with $1 \leq i < j \leq |\sigma|$ such that $\sigma(k) = A_n$ if and only if $i \leq k \leq j$. Furthermore, we say that $\sigma$ is of type $(a_1,a_2,\dots, a_{n-1})$ if the number of letter $A_\alpha$s in the first $i-1$ letters of $\sigma$ is $a_\alpha$ for $\alpha = 1,2,\dots, n-1$.
\end{defn}

For example, using the triple of dice shown in Figure \ref{2ndBalNontranEx}, the word $A_1A_2A_1A_2^2A_3^4A_1^3A_2^2$ is central of type $(2,3)$. Further, the word $A_1A_2A_3A_4A_5A_4A_3A_2A_1$ is central of type $(1,1,1,1)$.

\begin{observation} \label{SplitCentralWords}
    For a central word $\sigma$, we can write $\sigma = \tau_1 A_{n}^{m_n} \tau_2$ where $\tau_1,\tau_2$ are words for dice $(A_1,A_2,\dots,A_{n-1})$. Thus, we can write $\sigma_\text{new} = \tau_1A_n^\ell \tau_2$ for any positive integer $\ell$ and see that $\sigma_\text{new}$ is also a central word with $P_\sigma(A_i<A_{i+1}) = P_{\sigma_\text{new}}(A_i<A_{i+1})$ for $i \in \{ 1,\dots,n-1 \}$ and $P_\sigma(A_n < A_1) = P_{\sigma_\text{new}}(A_n<A_1)$.
\end{observation}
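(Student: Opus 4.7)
The plan is to express each of the relevant probabilities as a ratio whose numerator counts positional pairs in $\sigma$ and whose denominator is a product of side-counts, and then to observe that replacing the block $A_n^{m_n}$ with $A_n^\ell$ changes neither the numerator nor the denominator. Explicitly, for distinct dice $A_i, A_j$ one has
\[
P(A_i < A_j) = \frac{1}{m_i m_j} \, \bigl| \{ (p,q) : \sigma(p) = A_i, \; \sigma(q) = A_j, \; p < q \} \bigr|,
\]
so the statement reduces to verifying that this pair count is preserved under $m_n \mapsto \ell$.

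First I would dispatch the indices $i \in \{1, \dots, n-2\}$. Here neither $A_i$ nor $A_{i+1}$ is $A_n$, so every pair contributing to the count lies entirely in $\tau_1 \tau_2$ (ignoring the $A_n$-block between them). Since $\tau_1$ and $\tau_2$ are kept fixed in passing from $\sigma$ to $\sigma_{\text{new}}$, and every position of $\tau_1$ still precedes every position of $\tau_2$, the relative order of the non-$A_n$ letters is preserved, so the numerator is unchanged; the denominator $m_i m_{i+1}$ is obviously unchanged because only $m_n$ is altered.

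Next I would treat the two probabilities that do involve $A_n$. Let $a_{n-1}$ denote the number of $A_{n-1}$'s in $\tau_1$ and $b_1$ the number of $A_1$'s in $\tau_2$. By centrality every $A_n$-position lies strictly between the positions of $\tau_1$ and those of $\tau_2$, so the number of pairs $(p,q)$ with $\sigma(p) = A_{n-1}$, $\sigma(q) = A_n$, $p < q$ is exactly $a_{n-1} \cdot m_n$, and analogously the $(A_n, A_1)$ count is $m_n \cdot b_1$. Dividing out,
\[
P(A_{n-1} < A_n) = \frac{a_{n-1}}{m_{n-1}}, \qquad P(A_n < A_1) = \frac{b_1}{m_1},
\]
both visibly independent of $m_n$. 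Replacing $m_n$ with $\ell$ leaves $a_{n-1}, b_1, m_{n-1}, m_1$ all fixed, so these probabilities are preserved; and $\sigma_{\text{new}}$ is clearly still central because the new $A_n$'s again form a single contiguous block flanked by $\tau_1$ and $\tau_2$.

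There is no serious obstacle here. The argument is entirely a matter of tracking which side-counts appear in which pair counts; the key leverage is that the contiguous block of $A_n$'s forces the $m_n$ in the numerator of every $A_n$-involving probability to cancel against the $m_n$ in the denominator, so those probabilities depend only on $\tau_1$ and $\tau_2$.
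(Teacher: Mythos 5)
Your proof is correct and is essentially the argument the paper leaves implicit (the paper states this as an observation without a written proof). The key computation you isolate—that every pair count involving $A_n$ scales linearly in the block length, so the $m_n$ (or $\ell$) cancels against the denominator, while pair counts not involving $A_n$ are determined entirely by $\tau_1$ and $\tau_2$—is exactly the content of the observation.
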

The notation $P_\sigma(A_1<A_2)$ is used to clarify that the dice $A_1$ and $A_2$ are those that belong to the word $\sigma$.

\begin{defn}
    Let $\sigma$ be a word for dice $(A_1,A_2,\dots,A_n)$. Then, we define $N_\sigma(A_\alpha < A_\beta)$ as the number of pairs $(i,j)$ with $1 \leq i < j \leq |\sigma|$ such that the $i$th letter in $\sigma$ is $A_\alpha$ and the $j$th letter in $\sigma$ is $A_\beta$.
\end{defn}

For example, using the modified Efron dice case in Figure \ref{ModEfronDice}, we see $\sigma = A_1^2A_2^3A_3^4A_4^6A_1^4A_2^3A_3^2$ so $N_\sigma(A_1<A_2) = 2\cdot 6 + 4\cdot 3 = 24$. Similarly, $N_\sigma(A_2<A_3) = 3\cdot 6 + 3 \cdot 2 = 24$. Further, note that $P_\sigma(A_\alpha < A_\beta) = \frac{N_\sigma(A_\alpha < A_\beta)}{m_\alpha \cdot m_\beta}$, where $m_i$ is the number of sides on die $A_i$ in $\sigma$.

\section{Proof of Theorem \ref{mainThm}}\label{MainProof}

We will first state lemmas \ref{mainLem1} and \ref{RationalExistenceLem} (proven in Section \ref{LemmaProofSection}). Then, we will prove Theorem \ref{mainThm} assuming these lemmas. First, we will handle the case in which $p = \pi_n$, showing that this necessitates $n$ be $4$. Then, assuming that $p < \pi_n$, we can use Lemma \ref{RationalExistenceLem} to guarantee the existence of rationals $p_2,p_3,\dots,p_k$ that satisfy the $k$ inequalities in Equation \ref{probIneqs}. We will then show that with particular choices of $s_j,m_i, \text{ and } a_\ell$, these $k$ inequalities imply the $n-2$ inequalities shown in Equation \ref{mainLem1Ineqs}, thus allowing us to apply Lemma \ref{mainLem1} to guarantee the existence of our desired BN $n$-tuple of dice.

In \cite{KKLLS2024}, D. Kim, R. Kim, Lee, Lim, and So prove the following important lemma to use in their proof of Theorem \ref{KKLLSThm}.
\begin{lemma} \label{KKLLSLemma}
    \cite{KKLLS2024} Let $m_1,m_2,m_3$ be positive integers and $a_1,a_2$ be nonnegative integers with $a_1 \leq m_1$ and $a_2 \leq m_2$. Then, for every integer $s$ such that
    $$a_1(m_2-a_2) \leq s \leq a_1m_2 + (m_1-a_1)(m_2-a_2),$$
    there exists a central $(m_1,m_2,m_3)$-word of type $(a_1,a_2)$ such that $N_\sigma(A_1<A_2) = s$.
\end{lemma}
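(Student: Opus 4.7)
My plan is to reduce the problem to the (standard) fact that, for a binary word with a fixed number of each letter, the number of ordered inversions can be tuned to any integer between $0$ and the maximum by adjacent swaps.

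First, I would use the central structure to split. Since $\sigma$ is a central $(m_1,m_2,m_3)$-word of type $(a_1,a_2)$, I can write $\sigma = \tau_1\, A_3^{m_3}\, \tau_2$ where $\tau_1$ is a word in the letters $A_1,A_2$ containing exactly $a_1$ copies of $A_1$ and $a_2$ copies of $A_2$, and $\tau_2$ is a word in $A_1,A_2$ containing exactly $m_1-a_1$ copies of $A_1$ and $m_2-a_2$ copies of $A_2$. Then I would observe that the $A_3$'s contribute nothing to $N_\sigma(A_1<A_2)$, and that every $A_1$ in $\tau_1$ precedes every $A_2$ in $\tau_2$, contributing exactly $a_1(m_2-a_2)$ cross-pairs. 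Conversely, $A_2$'s in $\tau_1$ precede $A_1$'s in $\tau_2$, contributing nothing. Hence
$$N_\sigma(A_1<A_2) \;=\; N_{\tau_1}(A_1<A_2) \;+\; N_{\tau_2}(A_1<A_2) \;+\; a_1(m_2-a_2).$$

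Second, I would establish the key tunability claim: for any nonnegative integers $u,v$ and any integer $t$ with $0 \le t \le uv$, there exists a word $\tau$ in the letters $A_1,A_2$ with exactly $u$ copies of $A_1$ and $v$ copies of $A_2$ such that $N_\tau(A_1<A_2) = t$. The endpoints are realized by $A_2^v A_1^u$ (value $0$) and $A_1^u A_2^v$ (value $uv$), and any adjacent swap of an $A_2 A_1$ into $A_1 A_2$ changes $N_\tau(A_1<A_2)$ by exactly $+1$, so a finite sequence of such swaps starting from $A_2^v A_1^u$ realizes every intermediate value.

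Third, I would apply this claim separately to $\tau_1$ (with $u=a_1$, $v=a_2$) and $\tau_2$ (with $u=m_1-a_1$, $v=m_2-a_2$). The sums $N_{\tau_1}(A_1<A_2) + N_{\tau_2}(A_1<A_2)$ then range over every integer in $[0,\, a_1 a_2 + (m_1-a_1)(m_2-a_2)]$, because the two summands are independently tunable over contiguous integer intervals starting at $0$. Shifting by $a_1(m_2-a_2)$ and simplifying $a_1(m_2-a_2)+a_1 a_2 = a_1 m_2$ shows that $N_\sigma(A_1<A_2)$ achieves every integer in
$$\bigl[\,a_1(m_2-a_2),\; a_1 m_2 + (m_1-a_1)(m_2-a_2)\,\bigr],$$
which is exactly the range claimed.

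I do not expect any real obstacle here; the only delicate point is verifying that every intermediate integer (not just the endpoints) is attained, which is what the adjacent-swap argument gives cleanly. Centrality of the resulting word is automatic from the construction $\tau_1 A_3^{m_3} \tau_2$.
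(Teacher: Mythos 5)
Your proof is correct. Note that the paper does not actually prove Lemma~\ref{KKLLSLemma} itself---it cites it from \cite{KKLLS2024} (as their Lemma~16) and instead proves the generalization Lemma~\ref{mainLem1} by induction on $n$, using exactly this lemma as the base case. So there is no ``paper's own proof'' to compare against directly, but the paper's proof of the inductive step in Lemma~\ref{mainLem1} relies on the same underlying mechanism you use: Observation~\ref{TranspoObs} shows that a transposition $A_j A_{j-1}\mapsto A_{j-1}A_j$ raises $q_{j-1}$ by exactly one while leaving the other $q_\ell$ fixed, and the paper sweeps from the minimal word $\sigma_0$ to the maximal word $\sigma_\gamma$ one swap at a time. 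Your version is a clean, modular instance of the same idea in the base case: you decompose $N_\sigma(A_1<A_2)$ into the three summands $N_{\tau_1}+N_{\tau_2}+a_1(m_2-a_2)$, tune $N_{\tau_1}\in[0,a_1a_2]$ and $N_{\tau_2}\in[0,(m_1-a_1)(m_2-a_2)]$ independently by adjacent swaps, and note that the Minkowski sum of two contiguous integer intervals anchored at $0$ is again contiguous. The algebraic check $a_1(m_2-a_2)+a_1a_2=a_1m_2$ recovers the stated upper endpoint, and centrality and type $(a_1,a_2)$ are built into the ansatz $\tau_1 A_3^{m_3}\tau_2$. This is a complete and correct argument; the only cosmetic remark is that the definition of ``central'' in the paper technically requires $i<j$ (a block of at least two $A_3$'s), so if one wants the lemma to hold verbatim when $m_3=1$ one should either read the definition as $i\le j$ or invoke Observation~\ref{SplitCentralWords} to pad the $A_3$ block, but this is orthogonal to your construction.
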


We extend this lemma for $n \geq 4$ dice, acknowledging the extension relies on the same ideas as D. Kim, R. Kim, Lee, Lim, and So's proof of Lemma \ref{KKLLSLemma}.

\begin{lemma} \label{mainLem1}
    Let $m_1,\dots, m_n$ be positive integers and $a_1,\dots, a_{n-1}$ be nonnegative integers such that $a_i\leq m_i$ for $i=1,2,\dots, n-1$. Then, for every tuple of integers $(s_1,\dots, s_{n-2})$ such that
    \begin{equation}\label{mainLem1Ineqs}
    \begin{split}
        a_1(m_2-a_2) \leq &s_1 \leq a_1m_2 + (m_1-a_1)(m_2-a_2), \\
        a_2(m_3-a_3) \leq &s_2 \leq a_2m_3 + (m_2-a_2)(m_3-a_3), \\
        &\vdots \\
        a_{n-2}(m_{n-1}-a_{n-1}) \leq &s_{n-2} \leq a_{n-2}m_{n-1} + (m_{n-2}-a_{n-2})(m_{n-1}-a_{n-1}),
    \end{split}
    \end{equation}
    there exists a central $(m_1,m_2,\dots, m_n)$-word $\sigma$ of type $(a_1,a_2,\dots,a_{n-1})$ for dice $(A_1,A_2,\dots, A_n)$ with
    $$N_\sigma(A_i < A_{i+1}) = s_i$$
    for $i = 1,\dots,n-2$.
\end{lemma}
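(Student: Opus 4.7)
The plan is to proceed by induction on $n$, using Lemma \ref{KKLLSLemma} as the base case $n=3$. Suppose the statement holds for some $n-1 \geq 3$, and consider an instance of the lemma with parameters $m_1,\dots,m_n$, $a_1,\dots,a_{n-1}$, and $(s_1,\dots,s_{n-2})$ satisfying Equation \ref{mainLem1Ineqs}. The goal is to build a central $(m_1,\dots,m_n)$-word $\sigma$ of type $(a_1,\dots,a_{n-1})$ with the required counts.

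The first move is to apply the inductive hypothesis to the $(n-1)$-tuple $(A_2,A_3,\dots,A_n)$ with parameters $m_2,\dots,m_n$, $a_2,\dots,a_{n-1}$, and $(s_2,\dots,s_{n-2})$. The inequalities required of the IH are precisely the last $n-3$ inequalities of \ref{mainLem1Ineqs}, so we obtain a central word $\sigma' = \tau_1' A_n^{m_n} \tau_2'$ of type $(a_2,\dots,a_{n-1})$ with $N_{\sigma'}(A_i<A_{i+1}) = s_i$ for $i=2,\dots,n-2$. I will then build $\sigma$ by inserting $a_1$ copies of $A_1$ into $\tau_1'$ and $b_1 := m_1-a_1$ copies into $\tau_2'$. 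Since no new $A_n$'s are added, the central structure is preserved; and since the letter $A_1$ does not appear in $\sigma'$, none of the counts $N(A_i<A_{i+1})$ for $i\geq 2$ are altered. The only remaining task is to arrange the insertions so that $N_\sigma(A_1<A_2)=s_1$.

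Setting $b_2 := m_2 - a_2$, decompose
$$N_\sigma(A_1<A_2) = c_1 + a_1 b_2 + c_2,$$
where $c_1$ counts pairs with both letters in the prefix, $c_2$ counts pairs with both letters in the suffix, and $a_1 b_2$ is the fixed cross-contribution between every $A_1$ in the prefix and every $A_2$ in the suffix (no $A_1$ in the suffix can precede an $A_2$ in the prefix). Since $c_1\in[0,a_1 a_2]$ and $c_2\in[0,b_1 b_2]$, the achievable range for $s_1$ is exactly $\bigl[a_1(m_2-a_2),\; a_1 m_2 + (m_1-a_1)(m_2-a_2)\bigr]$, which is the range guaranteed by the first inequality of \ref{mainLem1Ineqs}. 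Hence we may select $c_1\in[0,a_1 a_2]$ and $c_2\in[0,b_1 b_2]$ with $c_1+c_2 = s_1 - a_1 b_2$.

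The final sub-step, which I expect to be the main point requiring care, is an insertion claim: given any word $w$ containing exactly $k$ copies of $A_2$ and no copies of $A_1$, and any integer $c\in[0,\ell k]$, one can insert $\ell$ copies of $A_1$ into $w$ so that the resulting word has exactly $c$ pairs of the form (an $A_1$ preceding an $A_2$). This follows from a unit-swap argument: place all $\ell$ copies of $A_1$ at the right end of $w$ (giving count $0$), then shift the $A_1$'s leftward one position at a time. A leftward shift past an $A_2$ increases the count by exactly $1$, while a shift past any other letter (including another $A_1$) leaves it unchanged, so every integer from $0$ up to $\ell k$ is attained. Applying this claim once to $\tau_1'$ (with $\ell=a_1$, $k=a_2$, target $c_1$) and once to $\tau_2'$ (with $\ell=b_1$, $k=b_2$, target $c_2$) produces the desired word $\sigma$ and completes the induction.
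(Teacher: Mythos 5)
Your proof is correct, but it takes a genuinely different route from the paper's. You apply the inductive hypothesis to the \emph{tail} $(A_2,\dots,A_n)$ (relabeled as an $(n-1)$-tuple) and then insert the $m_1$ copies of the \emph{first} letter $A_1$ into the two non-central blocks $\tau_1'$ and $\tau_2'$. The paper instead applies the inductive hypothesis to the \emph{head} $(A_1,\dots,A_{n-1})$, then constructs $\sigma_0 = A_{n-1}^{a_{n-1}} \tau_1 A_n^{m_n} A_{n-1}^{m_{n-1}-a_{n-1}} \tau_2$ by introducing the new \emph{last} letter $A_n$ in the center and splitting the $A_{n-1}$ block around it, and finally tunes the remaining count $N_\sigma(A_{n-2}<A_{n-1})$ by repeated transpositions (Observation \ref{TranspoObs}). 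Structurally the two arguments are mirror images: each side re-derives exactly one of the $n-2$ counts from scratch, each side verifies that the extreme configurations give the two endpoints of the relevant interval in (\ref{mainLem1Ineqs}), and each side hits every intermediate integer by a unit-step argument (your adjacent swap of a newly inserted $A_1$ past an $A_2$ is the exact analogue of the paper's Case~2/3 transposition $A_n A_{n-1}\mapsto A_{n-1}A_n$). A mild advantage of your version is that the inserted letter $A_1$ is absent from $\sigma'$, so preservation of the counts $N(A_i<A_{i+1})$, $i\geq 2$, is immediate; the paper must instead argue that splitting and sliding the $A_{n-1}$ block leaves the counts $N(A_i<A_{i+1})$, $i\leq n-3$, untouched. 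A small gain on the paper's side is that it does not need a relabeling step when invoking the inductive hypothesis.
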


\begin{lemma}\label{RationalExistenceLem}
    Suppose there exists integer $n \geq 3$ and rational $p \in \left( \frac{1}{2}, \pi_n\right)$ with $\pi_n$ as in Equation \ref{Pi_nDef}. Further, suppose
    $$k = \left\lfloor \frac{n-1}{2} \right\rfloor \text{ and } \beta = \begin{cases} 1 \text{ if } n \text{ even} \\ 2 \text{ if } n \text{ odd} \end{cases}.$$
    Then, there exists rationals $p_2,p_3,\dots,p_k \in (0,1)$ such that following $k$ inequalities hold
    \begin{equation}\label{probIneqs}
    \begin{split}
        \frac{p_2}{1+p_2} \leq &p \leq \frac{1}{2-p_2} \\
        (1-p_2)p_3 \leq &p \leq 1-p_2+p_2p_3 \\
        &\vdots \\
        (1-p_{k-1})p_{k} \leq &p \leq 1-p_{k-1}+p_{k-1}p_{k} \\
        \frac{\beta}{2}(1-p_k)^\beta \leq &p \leq 1-\frac{\beta}{2}p_k^\beta
    \end{split}
    \end{equation}
\end{lemma}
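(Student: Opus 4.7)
The plan is to prove the lemma by first exhibiting a real tuple $(\tilde p_2,\ldots,\tilde p_k)\in(0,1)^{k-1}$ in the open interior of the region cut out by (\ref{probIneqs}) at $p$, and then invoking density of $\mathbb Q$: each inequality in (\ref{probIneqs}) is polynomial in $p_2,\ldots,p_k$ with rational coefficients, so the strict-inequality locus is open in $\mathbb R^{k-1}$ and any nonempty open subset of $(0,1)^{k-1}$ contains rational points.

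The construction is trigonometric, motivated by the closed form of $\pi_n$. For $\phi\in(\pi/(n+2),\pi/4)$, write $p(\phi):=1-\tfrac{1}{4\cos^2\phi}$ (a decreasing bijection from this interval onto $(1/2,\pi_n)$), and set
$$\tilde p_j(\phi)\;:=\;1-\frac{\sin(j\phi)}{2\cos\phi\,\sin((j+1)\phi)},\qquad j=2,\ldots,k.$$
Using only the identity $2\cos\phi\sin((j+1)\phi)=\sin((j+2)\phi)+\sin(j\phi)$, I would verify that $\tilde p_j(\phi)\in(0,1)$ whenever $\sin((k+2)\phi)>0$, and that $\tilde p_j\cdot(1-\tilde p_{j+1})=1-p$ exactly---making every middle upper bound of (\ref{probIneqs}) an equality. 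A short further computation, using $\sin(3\phi)=\sin\phi(4\cos^2\phi-1)$ for the first inequality and a parity case split ($\beta=1$ or $2$) for the last, shows that the first and last upper bounds are also attained with equality, while each of the corresponding lower bounds reduces to a quadratic inequality whose discriminant in the relevant variable is negative, so each holds strictly for all $p\in(0,1)$.

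Given rational $p\in(1/2,\pi_n)$, I would fix $p'\in(p,\pi_n)$ slightly larger and apply the tuple $(\tilde p_j(\phi))_{j=2}^k$ with $\phi$ corresponding to $p'$. Because every upper bound is an equality at $p'$ and $p<p'$, each upper bound becomes a \emph{strict} inequality at $p$; the lower bounds stay strict by continuity when $p'-p$ is small. Hence the real tuple lies in the interior of the feasible region at $p$, and rational $p_j\in(0,1)$ close to it satisfy (\ref{probIneqs}) at $p$. The main obstacle is coverage: the condition $\sin((k+2)\phi)>0$ forces $\phi<\pi/(k+2)$, and for $n\geq 7$ this excludes a subinterval of $p$ near $1/2$. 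That regime I would handle either by a phase-shifted variant $1-\sin(j\phi+\psi)/(2\cos\phi\sin((j+1)\phi+\psi))$ with $\psi<0$ chosen so that $\sin((k+2)\phi+\psi)>0$, or by a constant profile $p_j\equiv c$ for an appropriate rational $c\in(1/2,1)$ (which directly satisfies (\ref{probIneqs}) for $p$ in an interval determined by $c$ and $\beta$); a short case analysis shows these pieces patch together to cover all of $(1/2,\pi_n)$. The core technical work is the trigonometric identities above, which follow by induction on $j$ from the three-term recurrence $\sin((j+2)\phi)=2\cos\phi\sin((j+1)\phi)-\sin(j\phi)$.
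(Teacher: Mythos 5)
Your central ingredient is the right one: the tuple
$\tilde p_j(\phi)=1-\frac{\sin(j\phi)}{2\cos\phi\,\sin((j+1)\phi)}=\frac{\sin((j+2)\phi)}{2\cos\phi\,\sin((j+1)\phi)}$
is exactly the Komisarski tuple $p_j^*$ the paper uses, and your identity $\tilde p_j(1-\tilde p_{j+1})=\tfrac{1}{4\cos^2\phi}$ is correct and is what makes the middle upper bounds collapse to equalities. But the decision to let $\phi$ vary with $p$ introduces a coverage problem that is not actually fixable by the patches you sketch, and the paper's proof avoids it entirely.

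The gap is this. You need $\tilde p_j(\phi)\in(0,1)$ for $j=2,\dots,k$, which forces $\phi<\pi/(k+2)$. Since $k=\lfloor (n-1)/2\rfloor$, we have $\pi/(k+2)<\pi/4$ as soon as $n\ge 7$, so the parametrization $p(\phi)=1-\tfrac{1}{4\cos^2\phi}$ only produces valid tuples for $p$ in a \emph{proper} subinterval $\bigl(p(\pi/(k+2)),\pi_n\bigr)$ of $(1/2,\pi_n)$. Worse, as $n\to\infty$ both endpoints tend to $3/4$, so the covered interval shrinks to nothing while the excluded part grows to essentially $(1/2,3/4)$. Your first patch (phase shift $\psi$) destroys the very cancellation you rely on: with $j\phi\mapsto j\phi+\psi$ the middle identity $\tilde p_j(1-\tilde p_{j+1})=\tfrac{1}{4\cos^2\phi}$ survives, but the first upper bound $\tfrac{1}{2-\tilde p_2}$ no longer simplifies to $p(\phi)$, because $\sin(4\phi+\psi)+2\sin(2\phi+\psi)$ does not factor as $8\sin\phi\cos^3\phi$ would. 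Your second patch (constant profile $p_j\equiv c$) fails outright for $p$ near $3/4$: the first upper bound $\tfrac{1}{2-c}\ge p$ forces $c\ge 2-1/p$, while for $n$ even the last upper bound $1-\tfrac{c}{2}\ge p$ forces $c\le 2(1-p)$, and these are incompatible once $p>0.73$ or so (e.g.\ $p=0.74$ gives $c\ge 0.649$ and $c\le 0.52$). There is also a smaller, separate gap: you never verify that the middle \emph{lower} bounds $(1-\tilde p_j)\tilde p_{j+1}\le p$ hold strictly at $p'$; they are genuinely different from the upper-bound equality, and checking them requires a product-to-sum argument (the paper uses $\sin(j\alpha)\sin((j+3)\alpha)/\bigl(\sin((j+1)\alpha)\sin((j+2)\alpha)\bigr)=\frac{\cos((2j+3)\alpha)-\cos(3\alpha)}{\cos((2j+3)\alpha)-\cos(\alpha)}$ together with a sign case split), not just the three-term recurrence.

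The paper sidesteps all of this by \emph{fixing} $\phi=\alpha=\pi/(n+2)$ once and for all. At that single point, $(k+2)\alpha<\pi$ holds automatically (since $k+2\le (n+3)/2<n+2$), so the tuple is valid; Lemma~\ref{U(*)=Pi_n} shows every upper bound equals $\pi_n$ exactly, and Lemma~\ref{L(*)<1/2} shows every lower bound is $<1/2$. Then for \emph{any} rational $p\in(1/2,\pi_n)$ one has $L(p^*)<1/2<p<\pi_n=U(p^*)$ with uniform slack, and the continuity-plus-density step finishes. The moral is that you do not need to tune $\phi$ to $p$: the crucial fact is not ``upper bound $=p$'' but ``lower bound $<1/2$,'' which holds universally at the fixed $\alpha$ and instantly covers the entire interval. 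If you keep your construction but simply set $\phi=\pi/(n+2)$ throughout, prove the lower bounds are $<1/2$ (this is the real content, and your discriminant idea only handles the first and last ones), and then perturb to rationals, you recover the paper's argument without any coverage issue.
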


Assuming Lemmas \ref{mainLem1} and \ref{RationalExistenceLem}, we will proceed to prove Theorem \ref{mainThm}.

\begin{proof}[\bf{Proof of Theorem \ref{mainThm}}]
    Suppose $n \geq 3$ is an integer and real number $p \in \left( \frac{1}{2}, \pi_n \right]$ is rational. First, consider the case in which $p = \pi_n$, implying that $\pi_n$ is rational.
    \\[5mm]Assume $\pi_n = 1-\frac{1}{4\cos^2\left( \frac{\pi}{n+2} \right)}$ is rational. Then, $\cos^2\left( \frac{\pi}{n+2} \right)$ is rational and accordingly $\cos\left( \frac{2\pi}{n+2} \right) = 2\cos^2\left( \frac{\pi}{n+2} \right)-1$ is rational. Thus, we see that $\pi_n$ rational implies that the real part of the $(n+2)$nd root of unity is also rational.  Since we are only concerned with $n\geq 3$ and only the fourth and sixth roots of unity have rational real part, $n$ must be $4$. In fact, $\pi_4 = \frac{2}{3}$.
    \\[5mm]We will now prove the $n=4$ case. Suppose that $m$ is a positive even integer sufficiently large that $pm$ is also an integer. Let $a_1 = (1-p)m,a_2=m/2, \text{ and }a_3 = pm.$ Then, since $p \in \left( \frac{1}{2}, \frac{2}{3} \right] \subseteq \left[ \frac{1}{3}, \frac{2}{3} \right]$, the following inequalities are satisfied:
    \begin{align*}
        a_1(m-a_2) \leq &pm^2 \leq a_1m + (m-a_1)(m-a_2) \text{ and} \\
        a_2(m-a_3) \leq &pm^2 \leq a_2m+(m-a_2)(m-a_3).
    \end{align*}
    Thus, by Lemma \ref{mainLem1}, there exists a central $(m,m,m,m)$-word $\sigma$ of type $(a_1,a_2,a_3)$ such that $N_\sigma(A_1<A_2) = N_\sigma(A_2<A_3) = pm^2$. Furthermore, since $\sigma$ is central of type $(a_1,a_2,a_3)$ with $a_1 = (1-p)m$ and $a_3=pm$, we have $N_\sigma(A_4<A_1) = N_\sigma(A_3<A_4) = pm^2$. Hence,
    $$P_\sigma(A_1<A_2) = P_\sigma(A_2<A_3) = P_\sigma(A_3 < A_4) = P_\sigma(A_4<A_1) = \frac{pm^2}{m^2} = p$$
    as desired.
    \\[5mm]Now, we will henceforth assume $p < \pi_n$. Let $$k = \left\lfloor \frac{n-1}{2} \right\rfloor \text{ and } \beta = \begin{cases} 1 \text{ if } n \text{ even} \\ 2 \text{ if } n \text{ odd} \end{cases}.$$
    By Lemma \ref{RationalExistenceLem}, there exists rationals $p_2,p_3,\dots,p_k \in (0,1)$ such that the following $k$ inequalities from Equation \ref{probIneqs} hold.
    \begin{align*}
        \frac{p_2}{1+p_2} \leq &p \leq \frac{1}{2-p_2} \\
        (1-p_2)p_3 \leq &p \leq 1-p_2+p_2p_3 \\
        &\vdots \\
        (1-p_{k-1})p_{k} \leq &p \leq 1-p_{k-1}+p_{k-1}p_{k} \\
        \frac{\beta}{2}(1-p_k)^\beta \leq &p \leq 1-\frac{\beta}{2}p_k^\beta
    \end{align*}
    Suppose that $m$ is a sufficiently large even integer such that $mp,mp_2,mp_3,\dots,mp_k$ are integers. Further, suppose $m_i = m$ and $s_j = pm^2$ for $i \in \{1,2,\dots,n \}$ and $j \in \{1,2,\dots,n-2 \}$. Finally, suppose $a_1,a_2,\dots,a_{n-1}$ are nonnegative integers defined as follows.
    \begin{itemize}
        \item $a_1 = (1-p)m$
        \item $a_\ell = (1-p_\ell)m$ for $\ell \in \{2,3,\dots,k \}$
        \item $a_{k+1} = \begin{cases} p_{n-(k+1)}m \text{ if } n \text{ odd} \\ \frac{m}{2} \text{ if } n \text{ even} \end{cases}$
        \item $a_\ell = p_{n-\ell}m$ for $\ell \in \{k+2,k+3,\dots,n-2 \}$
        \item $a_{n-1} = pm$
    \end{itemize}
    
    Then, with these choices of $s_j,m_i \text{ and } a_\ell$, we have the following implications:
    \begin{itemize}
        \item $\frac{p_2}{1+p_2} \leq p \leq \frac{1}{2-p_2}$ implies both
        $$\begin{cases} a_1(m_2-a_2) \leq s_1 \leq a_1m_2 + (m_1-a_1)(m_2-a_2) \\ a_{n-2}(m_{n-1}-a_{n-1}) \leq s_{n-2} \leq a_{n-2}m_{n-1} + (m_{n-2}-a_{n-2})(m_{n-1}-a_{n-1}) \end{cases}$$
        \item For $j \in \{ 2,3,\dots,k-1 \}$ and $\hat{j} = n-(j+1)$, we have $(1-p_{j})p_{j+1} \leq p \leq 1-p_{j}+p_{j}p_{j+1}$ implies both
        $$\begin{cases} a_j(m_{j+1}-a_{j+1}) \leq s_j \leq a_jm_{j+1} + (m_j-a_j)(m_{j+1}-a_{j+1}) \\ a_{\hat{j}}(m_{\hat{j}+1}-a_{\hat{j}+1}) \leq s_{\hat{j}} \leq a_{\hat{j}}m_{\hat{j}+1} + (m_{\hat{j}}-a_{\hat{j}})(m_{\hat{j}+1}-a_{\hat{j}+1}) \end{cases}$$
        \item If $n$ is even, then $\frac{\beta}{2}(1-p_k)^\beta \leq p \leq 1-\frac{\beta}{2}p_k^\beta$ is equivalent to $\frac{1-p_k}{2} \leq p \leq 1-\frac{p_k}{2}$ which implies both
        $$\begin{cases} a_k(m_{k+1}-a_{k+1}) \leq s_k \leq a_km_{k+1} + (m_k-a_k)(m_{k+1}-a_{k+1}) \\ a_{k+1}(m_{k+2}-a_{k+2}) \leq s_{k+1} \leq a_{k+1}m_{k+2} + (m_{k+1}-a_{k+1})(m_{k+2}-a_{k+2}) \end{cases}$$
        \item Finally, if $n$ is odd, then $\frac{\beta}{2}(1-p_k)^\beta \leq p \leq 1-\frac{\beta}{2}p_k^\beta$ is equivalent to $(1-p_k)^2 \leq p \leq 1-p_k^2$ which implies
        $$a_k(m_{k+1}-a_{k+1}) \leq s_k \leq a_km_{k+1} + (m_k-a_k)(m_{k+1}-a_{k+1}).$$
    \end{itemize}
    Thus, the $k$ inequalities in Equation \ref{probIneqs} imply the following $n-2$ inequalities.
    \begin{align*}
        a_1(m_2-a_2) \leq &s_1 \leq a_1m_2 + (m_1-a_1)(m_2-a_2), \\
        a_2(m_3-a_3) \leq &s_2 \leq a_2m_3 + (m_2-a_2)(m_3-a_3), \\
        &\vdots \\
        a_{n-2}(m_{n-1}-a_{n-1}) \leq &s_{n-2} \leq a_{n-2}m_{n-1} + (m_{n-2}-a_{n-2})(m_{n-1}-a_{n-1}).
    \end{align*}
    Now, we apply Lemma \ref{mainLem1} and conclude. With $m_i,s_j, \text{ and } a_\ell$ defined as above, Lemma \ref{mainLem1} guarantees the existence of a central $(m,m,\dots,m)$-word $\sigma$ of type $(a_1,a_2\dots,a_{n-1})$ with $N_\sigma(A_i < A_{i+1}) = pm^2$ for $i \in \{ 1,\dots,n-2\}$. Further, since $\sigma$ is a central word of type $(a_1,a_2\dots,a_{n-1})$ with $a_1=(1-p)m$ and $a_{n-1} = pm$, we have $N_\sigma(A_{n-1}<A_n) = pm^2$ and $N_\sigma(A_n<A_1) = pm^2$. Thus,
    $$P_\sigma(A_1<A_2) = P_\sigma(A_2<A_3) = \cdots = P_\sigma(A_n<A_1) = p,$$
    concluding that $\sigma$ is a nontransitive and balanced central $(m,m,\dots,m)$-word of type \\$(a_1,a_2,\dots,a_{n-1})$ with $w(A_1,A_2,\dots,A_n) = p$ as desired.
\end{proof}

\section{Proof of Lemmas \ref{mainLem1} and \ref{RationalExistenceLem}} \label{LemmaProofSection}

\subsection{Proof of Lemma \ref{mainLem1}}
\begin{remark}
    In this subsection, if $\sigma$ is a word for dice $(A_1,A_2,\dots,A_n)$, we will denote $q_i(\sigma) = N_\sigma(A_i<A_{i+1})$ for $i \in \{ 1,\dots, n-1\}$ and $q_n(\sigma) = N_\sigma(A_n<A_1)$.
\end{remark}

This is an extension of Lemma \ref{KKLLSLemma}. We will begin with a definition and an observation before proceeding to the proof of Lemma \ref{mainLem1}.
\begin{defn}
    The \emph{view} of a letter $A_i$ in a word $\sigma$ for dice $(A_1,A_2,\dots,A_n)$ is $\{ A_{i-1},A_{i+1} \}$ where we assume the cyclic relations $A_{n+1} := A_1$ and $A_{0} := A_n$.
\end{defn}
Note that if $A_i$ is in the view of $A_j$, then $A_j$ is also in the view of $A_i$.
\begin{observation} \label{TranspoObs}
    Let $\sigma$ be a word for dice $(A_1,A_2,\dots,A_n)$. For distinct $i,j \in \{ 1,2,\dots,n \}$, make a transposition $A_iA_j \mapsto A_jA_i$ to $\sigma$ to get $\sigma_{\text{new}}$. Then,
    \begin{itemize}
        \item Case 1: $A_j$ is not in the view of $A_i$. Then, for all $\ell \in \{ 1,2,\dots, n \}$
        $$q_\ell(\sigma) = q_\ell(\sigma_{\text{new}}).$$
        For example, $\sigma = A_3A_1A_3A_4A_3A_2A_3$ and $\sigma_\text{new} = A_1A_3^2A_4A_3A_2A_3$ with $i=3$ and $j=1$.
        \item Case 2: $j = i-1$ or ($j=n$ and $i=1$). Then, for all $\ell \in \{ 1,2,\dots,n\}\setminus \{j\}$,
        $$q_\ell(\sigma) = q_\ell(\sigma_\text{new})$$
        and $q_j(\sigma_\text{new}) = q_j(\sigma) + 1$.
        For example, $\sigma = A_3A_1A_3A_4A_3A_2A_3$ and $\sigma_{\text{new}} = A_3A_1A_3A_4A_2A_3^2$ with $i=3$ and $j=2$.
        \item Case 3: $j = i+1$ or ($j=1$ and $i=n$). Then, for all $\ell \in \{ 1,2,\dots,n\}\setminus \{i\}$,
        $$q_\ell(\sigma) = q_\ell(\sigma_\text{new})$$
        and $q_{i}(\sigma_\text{new}) = q_i(\sigma) - 1$. For example, $\sigma = A_3A_1A_3A_4A_3A_2A_3$ and $\sigma_\text{new} = A_3A_1A_3A_4A_3^2A_2$ with $i = 2$ and $j=3$.
    \end{itemize}
\end{observation}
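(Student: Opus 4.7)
The plan is to analyze the effect of a single adjacent transposition directly from the definition of $N_\sigma(A_\alpha<A_\beta)$, then translate the result to the quantities $q_\ell$. Fix positions $p$ and $p+1$ at which $\sigma$ reads $A_iA_j$, so that $\sigma_{\text{new}}$ reads $A_jA_i$ there and agrees with $\sigma$ everywhere else. For any ordered letter pair $(A_\alpha,A_\beta)$, split the indexing pairs $(k,l)$ with $k<l$ into three classes: (I) both of $k,l$ lie outside $\{p,p+1\}$; (II) exactly one of $k,l$ lies in $\{p,p+1\}$; (III) $(k,l)=(p,p+1)$.

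The pairs in class (I) are literally unchanged, so they contribute identically to $N_\sigma$ and $N_{\sigma_{\text{new}}}$. For class (II), the key observation is that the multiset of letters occupying positions $\{p,p+1\}$ is the same before and after (namely $\{A_i,A_j\}$); hence for any fixed third position $r\notin\{p,p+1\}$, the number of class (II) pairs that contribute to $N(A_\alpha,A_\beta)$ is the same for $\sigma$ and $\sigma_{\text{new}}$. Indeed, with $r<p$, the pair $(r,p)$ and $(r,p+1)$ together contribute a $+1$ to $N(A_\alpha,A_\beta)$ iff the letter at $r$ is $A_\alpha$ and $A_\beta\in\{A_i,A_j\}$, a condition depending only on the multiset; the case $r>p+1$ is symmetric. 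Finally, for class (III), the single pair $(p,p+1)$ contributes to $N(A_i,A_j)$ in $\sigma$ and to $N(A_j,A_i)$ in $\sigma_{\text{new}}$. Combining everything,
\begin{equation*}
N_{\sigma_{\text{new}}}(A_i<A_j)=N_\sigma(A_i<A_j)-1,\qquad N_{\sigma_{\text{new}}}(A_j<A_i)=N_\sigma(A_j<A_i)+1,
\end{equation*}
and $N(A_\alpha<A_\beta)$ is unchanged for every other ordered pair $(\alpha,\beta)$.

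It remains to translate this into a statement about the $q_\ell$'s. Recall $q_\ell(\sigma)=N_\sigma(A_\ell<A_{\ell+1})$ with the cyclic convention $A_{n+1}:=A_1$. So $q_\ell$ is affected precisely when $(A_\ell,A_{\ell+1})\in\{(A_i,A_j),(A_j,A_i)\}$, i.e.\ when the consecutive (cyclic) pair $(\ell,\ell+1)$ equals either $(i,j)$ or $(j,i)$. This happens iff $A_j$ lies in the view $\{A_{i-1},A_{i+1}\}$ of $A_i$, which immediately recovers the trichotomy of the statement. In Case 1, no such $\ell$ exists, so all $q_\ell$ are preserved. In Case 2 ($j=i-1$, or the cyclic analogue $j=n,i=1$), only $\ell=j$ satisfies $(A_\ell,A_{\ell+1})=(A_j,A_i)$, so $q_j$ goes up by $1$ and every other $q_\ell$ is unchanged. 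In Case 3 ($j=i+1$, or the analogue $j=1,i=n$), only $\ell=i$ satisfies $(A_\ell,A_{\ell+1})=(A_i,A_j)$, so $q_i$ decreases by $1$ and every other $q_\ell$ is unchanged.

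The only real obstacle is clean bookkeeping of the cyclic indices (especially the wrap-around pair involving $A_n$ and $A_1$), and making the class-(II) invariance argument precise without getting lost in subcases — the multiset observation is what makes it a one-line argument instead of a tedious enumeration.
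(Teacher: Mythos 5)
Your argument is correct: the three-class bookkeeping shows that an adjacent swap changes only $N(A_i<A_j)$ and $N(A_j<A_i)$, by $-1$ and $+1$ respectively, and the translation to the $q_\ell$'s via cyclic adjacency of $(\ell,\ell+1)$ to $(i,j)$ or $(j,i)$ recovers exactly the stated trichotomy, including the wrap-around cases. The paper states this as an observation without proof (only the worked examples), so your write-up is precisely the direct verification from the definition of $N_\sigma$ that the paper leaves implicit.
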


Now, we proceed with the proof of Lemma \ref{mainLem1}.
\begin{proof}[\bf{Proof of Lemma \ref{mainLem1}}]
    The case of $n=3$, Lemma \ref{KKLLSLemma}, is proven in \cite{KKLLS2024} by D. Kim, R. Kim, Lee, Lim, and So as their Lemma 16. We will proceed by induction on $n$. Given positive integer $j \geq 3$, assume that Lemma \ref{mainLem1} holds for $n=j$. Then, we want to show the $n=j+1$ case. Since the $n=j$ case holds, we can create a central $(m_1,\dots, m_j)$-word $\tau$ of type $(a_1,a_2,\dots,a_{j-1})$ for dice $(A_1,A_2,\dots, A_j)$ with
    $$q_i(\tau) = N_\tau(A_i<A_{i+1}) = s_i$$
    for $i = 1,\dots,j-2$.
    Since $\tau$ is central, we can write
    $$\tau = \tau_1 A_j^{m_j} \tau_2$$
    where $\tau_1,\tau_2$ are words for dice $(A_1,A_2,\dots, A_{j-1})$ by Observation \ref{SplitCentralWords}. Now, construct the word
    $$\sigma_0 = A_j^{a_j} \tau_1 A_{j+1}^{m_{j+1}} A_j^{m_j-a_j} \tau_2.$$
    Note that $N_{\sigma_0}(A_i<A_{i+1}) = q_{i}(\sigma_0) = q_i(\tau) = s_i$ for $i = 1,2,\dots, j-2$ and $q_{j-1}(\sigma_0) = N_{\sigma_0}(A_{j-1}<A_j) = a_{j-1}(m_j-a_j)$.
    Using $\sigma_0$ as defined above, define $\sigma_1,\dots,\sigma_{\gamma}$ for $\gamma = a_ja_{j-1} + (m_j-a_j)(m_{j-1}-a_{j-1})$ inductively as follows.
    \begin{enumerate}
        \item Begin with $\sigma_{i+1} = \sigma_i$.
        \item Perform as many transpositions $A_{j}A_{\alpha} \mapsto A_\alpha A_j$ for $\alpha \in \{ 1,\dots,j-2\}$ as possible.
        \item Perform one transposition $A_{j}A_{j-1} \mapsto A_{j-1}A_j$.
    \end{enumerate}
    Then, by Observation \ref{TranspoObs},
    $$q_{j-1}(\sigma_{i+1}) = q_{j-1}(\sigma_i)+1$$
    and for all $\ell \in \{1,\dots,j-2\}$,
    $$q_{\ell}(\sigma_{i+1}) = q_{\ell}(\sigma_i).$$
    
    Note that step 2 will guarantee that we can transpose $A_jA_{j-1} \mapsto A_{j-1}A_j$ until we have swapped every $A_{j}$ to the left (resp. right) of $A_{j+1}^{m_{j+1}}$ with every $A_{j-1}$ to the left (resp. right) of $A_{j+1}^{m_{j+1}}$. Further, note that there are a maximum of $a_ja_{j-1}$ transpositions $A_jA_{j-1} \mapsto A_{j-1}A_j$ to the left of $A_{j+1}^{m_{j+1}}$ and $(m_j-a_j)(m_{j-1}-a_{j-1})$ to the right.
    \\[5mm] The result of this process is that $\sigma_0,\sigma_1,\dots,\sigma_\gamma$ are central words of type $(a_1,a_2,\dots,a_j)$, and for $i \in  \{0,1,\dots,\gamma\}$,
    \begin{align*}
        q_{j-1}(\sigma_i) &= a_{j-1}(m_j-a_j) + i \text{ and} \\
        q_{\ell}(\sigma_i) &= q_\ell(\sigma_0) = s_i \:\:\:\:\:\: \text{ for } \ell \in \{1,\dots, j-2 \}
    \end{align*}
    Thus, $q_{j-1}(\sigma_\gamma) = a_{j-1}m_j+(m_j-a_j)(m_{j-1}-a_{j-1})$. Then, since
    $$q_{j-1}(\sigma_0) = a_{j-1}(m_j-a_j) \leq s_{j-1} \leq a_{j-1}m_j+(m_j-a_j)(m_{j-1}-a_{j-1}) = q_{j-1}(\sigma_\gamma),$$
    we define $\sigma = \sigma_{\psi}$ for $\psi = s_{j-1} - a_{j-1}(m_j-a_j)$. That way, $\sigma$ is a central word of type $(a_1,a_2,\dots,a_{j})$ for dice $(A_1,A_2,\dots,A_{j+1})$ such that
    $$N_{\sigma}(A_\ell < A_{\ell+1}) = q_{\ell}(\sigma) = s_{\ell}$$
    for $\ell = 1,2,\dots,j-1$ as desired. We have proven the inductive step.
\end{proof}

\subsection{Proof of Lemma \ref{RationalExistenceLem}}
We will first state and prove Lemmas \ref{L(*)<1/2} and \ref{U(*)=Pi_n} which will be used in the proof of Lemma \ref{RationalExistenceLem}. Our proof of \ref{RationalExistenceLem} will rely on the existence of not necessarily rational numbers $p_2^*,p_3^*,\dots,p_k^*$, first defined in \cite{Komisarski2021}, that satisfy the inequalities seen in Equation \ref{probIneqs} from the statement of Lemma \ref{RationalExistenceLem}. Then, we will use a continuity and density argument to show that there must exist rationals $p_2,p_3,\dots,p_k$ that also satisfy these inequalities. Lemmas \ref{L(*)<1/2} and \ref{U(*)=Pi_n} show that the chosen $p_2^*,p_3^*,\dots,p_k^*$ satisfy the desired inequalities to the most desirable degree.
\begin{lemma}\label{L(*)<1/2}
    Suppose there is an integer $n \geq 3$,
    $$k = \left\lfloor \frac{n-1}{2} \right\rfloor, \text{ and } \beta = \begin{cases} 1 \text{ if } n \text{ even} \\ 2 \text{ if } n \text{ odd} \end{cases}.$$
    For $j\in \{2,3,\dots,k \}$, define
    $$p_j^* = \frac{\sin[(j+2)\alpha]}{\sin[(j+1)\alpha]\cdot 2 \cos(\alpha)} \in (0,1)$$
    where $\alpha= \frac{\pi}{n+2}$. Then,
    $$\max \left\{ \frac{p_2^*}{1+p_2^*}, (1-p_2^*)p_3^*, \dots, (1-p_{k-1}^*)p_{k}^*, \frac{\beta}{2}(1-p_k^*)^\beta \right\} < \frac{1}{2}.$$
\end{lemma}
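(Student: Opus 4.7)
The strategy is to reduce every entry in the maximum to the single structural observation that $p_j^* > \frac{1}{2}$ for each $j \in \{2,\dots,k\}$. Applying the product-to-sum identity $2\sin((j+1)\alpha)\cos\alpha = \sin((j+2)\alpha) + \sin(j\alpha)$ to the definition, I would rewrite
\[
p_j^* = \frac{\sin((j+2)\alpha)}{\sin((j+2)\alpha)+\sin(j\alpha)}, \qquad 1-p_j^* = \frac{\sin(j\alpha)}{\sin((j+2)\alpha)+\sin(j\alpha)}.
\]
A short positivity check shows $\sin(j\alpha) > 0$ and $\sin((j+2)\alpha) > 0$ for $2 \le j \le k$, since $0 < j\alpha$ and $(j+2)\alpha < \pi$ (indeed $(k+2)\alpha = \pi/2$ when $n$ is even and $(k+2)\alpha = \frac{(n+3)\pi}{2(n+2)} < \pi$ when $n$ is odd). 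Using
\[
\sin((j+2)\alpha) - \sin(j\alpha) = 2\cos((j+1)\alpha)\sin\alpha
\]
and $\sin\alpha > 0$, the inequality $p_j^* > \frac{1}{2}$ is equivalent to $\cos((j+1)\alpha) > 0$, i.e.\ $(j+1)\alpha < \frac{\pi}{2}$, i.e.\ $j < \frac{n}{2}$. Since $k = \lfloor (n-1)/2 \rfloor < n/2$, this holds for every $j \in \{2,\dots,k\}$; consequently $p_j^* > \frac{1}{2}$ and $1 - p_j^* < \frac{1}{2}$ throughout that range.

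With this key inequality, each of the three types of terms in the maximum is handled immediately. For the leading term, the map $x \mapsto \frac{x}{1+x}$ is strictly less than $\frac{1}{2}$ on $(0,1)$, so $p_2^* \in (0,1)$ gives $\frac{p_2^*}{1+p_2^*} < \frac{1}{2}$. For a middle term $(1-p_j^*)p_{j+1}^*$ with $2 \le j \le k-1$, combining $1 - p_j^* < \frac{1}{2}$ with $p_{j+1}^* < 1$ yields $(1-p_j^*)p_{j+1}^* < \frac{1}{2}$. For the terminal term $\frac{\beta}{2}(1-p_k^*)^\beta$, inserting $1 - p_k^* < \frac{1}{2}$ gives $\frac{1}{2}(1-p_k^*) < \frac{1}{4}$ in the even case ($\beta = 1$) and $(1-p_k^*)^2 < \frac{1}{4}$ in the odd case ($\beta = 2$), both safely below $\frac{1}{2}$.

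Taking the maximum of finitely many quantities each strictly below $\frac{1}{2}$ finishes the proof. The only substantive step is recognising the equivalence $p_j^* > \frac{1}{2} \Leftrightarrow j < n/2$; after that, everything is bookkeeping across the three term types and the two parities of $n$. I do not expect a genuine obstacle here — the main potential pitfall is simply verifying that the positivity conditions needed to justify the sum-to-product rearrangements hold across the precise range $j \in \{2,\dots,k\}$, which the bound $(k+2)\alpha \leq \pi/2$ (for $n$ even) or $(k+2)\alpha < \pi$ (for $n$ odd) supplies.
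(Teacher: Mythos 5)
Your proof is correct, and it takes a genuinely different and cleaner route than the paper's. The paper handles each of the three term types by a separate ad hoc argument: the middle terms $(1-p_j^*)p_{j+1}^*$ are bounded by expanding the product of sines via complex exponentials, reducing to a ratio of cosine differences, and then splitting into cases according to the sign of $\cos[(2j+3)\alpha]$; and for the last term with $\beta=2$ the paper explicitly computes $p_k^* = \frac{1}{2\cos\alpha}$ and runs a chain of inequalities against $\frac{1}{\sqrt{2}}$. You instead isolate the single structural fact that $p_j^* > \frac{1}{2}$ for every $j \in \{2,\dots,k\}$, which drops out of the product-to-sum rewrite $p_j^* = \frac{\sin((j+2)\alpha)}{\sin((j+2)\alpha)+\sin(j\alpha)}$ together with $\sin((j+2)\alpha) - \sin(j\alpha) = 2\cos((j+1)\alpha)\sin\alpha > 0$, valid because $j < n/2$ throughout the relevant range. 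Once $1-p_j^* < \frac{1}{2}$ is in hand, every entry of the maximum is dispatched in one line, and you even get the sharper bound $(1-p_k^*)^\beta < 2^{-\beta}$ rather than merely $< \frac{1}{2}\cdot\frac{2}{\beta}$. This is a nicer proof: it is uniform across the three term types and both parities of $n$, it avoids the complex-exponential manipulation and the case split on $\cos[(2j+3)\alpha]$, and the one nontrivial verification (the equivalence $p_j^* > \frac{1}{2} \Leftrightarrow j < n/2$, plus the positivity of the relevant sines) is checked carefully and is correct for all $n \geq 3$.
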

\begin{proof}[Proof of Lemma \ref{L(*)<1/2}]
    We will begin by handling the first and last terms of the maximum. Since $p_2^* < 1$, we have $\frac{p_2^*}{1+p_2^*} < \frac{1}{2}$. If $\beta = 1$, then $\frac{\beta}{2}(1-p_k^*)^\beta < \frac{1}{2}$ because $0 < p_k^* < 1$ implies $0 < 1-p_k^* < 1$. If $\beta = 2$, then $n$ is odd so $k = \frac{n-1}{2}$. Thus,
    $$p_k^* = \frac{\sin\left( \left( \frac{n-1}{2}+2 \right) \frac{\pi}{n+2}\right)}{\sin\left( \left( \frac{n-1}{2}+1 \right) \frac{\pi}{n+2}\right) \cdot 2 \cos\left( \alpha \right)} = \frac{\sin\left( \left( \frac{n+3}{n+2} \right) \frac{\pi}{2}\right)}{\sin\left( \left( \frac{n+1}{n+2} \right) \frac{\pi}{2}\right) \cdot 2 \cos\left( \alpha \right)} = \frac{1}{2\cos\left( \alpha \right)}.$$
    Since $\cos(\alpha) < 1 < \frac{1}{2-\sqrt{2}}$,we see $1 > 2\cos(\alpha)\left( \frac{2-\sqrt{2}}{2} \right)$ which implies $\frac{1}{2\cos(\alpha)} > 1-\frac{1}{\sqrt{2}}$ since $\cos(\alpha) > 0$ for $n \geq 3$. Thus, $1-p_k^* = 1-\frac{1}{2\cos(\alpha)} < \frac{1}{\sqrt{2}}$ so $\frac{\beta}{2}(1-p_k^*)^\beta = (1-p_k^*)^2 < \frac{1}{2}$.
    \\[5mm] Now, we will show $(1-p_j^*)p_{j+1}^* < \frac{1}{2}$ for $j \in \{ 2,3,\dots, k-1 \}$. For such $j$,
    \begin{align*}
        (1-p_j^*)p_{j+1}^* &= \frac{\sin[j \alpha]}{\sin[(j+1)\alpha] \cdot 2\cos\alpha} \cdot \frac{\sin[(j+3) \alpha]}{\sin[(j+2)\alpha] \cdot 2\cos\alpha} \\
        &= \frac{1}{2} \cdot \frac{1}{2\cos^2(\alpha)} \cdot \frac{\sin[j \alpha] \cdot \sin[(j+3) \alpha]}{\sin[(j+1)\alpha] \cdot \sin[(j+2)\alpha]} \\
        &< \frac{1}{2} \cdot \frac{\sin[j \alpha] \cdot \sin[(j+3) \alpha]}{\sin[(j+1)\alpha] \cdot \sin[(j+2)\alpha]} \text{ since } \cos^2(\alpha) \geq \cos^2(\pi/5) > \frac{1}{2} \\
        &= \frac{1}{2} \cdot \frac{\left( e^{ij\alpha} - e^{-ij\alpha} \right) \cdot \left( e^{i(j+3)\alpha} - e^{-i(j+3)\alpha} \right)}{\left( e^{i(j+1)\alpha} - e^{-i(j+1)\alpha} \right) \cdot \left( e^{i(j+2)\alpha} - e^{-i(j+2)\alpha} \right)} \\
        &= \frac{1}{2} \cdot \frac{e^{i(2j+3)\alpha} + e^{-i(2j+3)\alpha} - \left( e^{i3\alpha} + e^{-i3\alpha} \right)}{e^{i(2j+3)\alpha} + e^{-i(2j+3)\alpha} - \left( e^{i\alpha} + e^{-i\alpha} \right)} \\
        &= \frac{1}{2} \cdot \frac{\cos[(2j+3)\alpha] - \cos(3\alpha)}{\cos[(2j+3)\alpha] - \cos(\alpha)}
    \end{align*}
    Thus, we consider two cases: $\cos[(2j+3)\alpha] \geq 0$ and $\cos[(2j+3)\alpha] < 0$.
    \\ {\bf Case 1:} assume $\cos[(2j+3)\alpha] \geq 0$. Then, $0 \leq \cos[(2j+3)\alpha] < \cos(3\alpha) < \cos(\alpha)$ which implies
    $$0 > \cos[(2j+3)\alpha] - \cos(3\alpha) > \cos[(2j+3)\alpha] - \cos(\alpha).$$
    \\ {\bf Case 2:} assume $\cos[(2j+3)\alpha] < 0$. Then, either $\cos[(2j+3)\alpha] < 0 \leq \cos(3\alpha) < \cos(\alpha)$ or $\cos[(2j+3)\alpha] < \cos(3\alpha) \leq 0 < \cos(\alpha)$. However, regardless,
    $$0 > \cos[(2j+3)\alpha] - \cos(3\alpha) > \cos[(2j+3)\alpha] - \cos(\alpha).$$
    \\ Therefore, in either case,
    $$\frac{\cos[(2j+3)\alpha] - \cos(3\alpha)}{\cos[(2j+3)\alpha] - \cos(\alpha)} < 1,$$
    which implies the desired $(1-p_j^*)p_{j+1}^* < \frac{1}{2}$.
\end{proof}

\begin{lemma}\label{U(*)=Pi_n}
    Suppose
    $$k = \left\lfloor \frac{n-1}{2} \right\rfloor \text{ and } \beta = \begin{cases} 1 \text{ if } n \text{ even} \\ 2 \text{ if } n \text{ odd} \end{cases}.$$
    For $j\in \{2,3,\dots,k \}$, define
    $$p_j^* = \frac{\sin[(j+2)\alpha]}{\sin[(j+1)\alpha]\cdot 2 \cos(\alpha)}$$
    where $\alpha= \frac{\pi}{n+2}$. Then,
    $$\min \left\{ \frac{1}{2-p_2^*},1-p_2^*+p_2^*p_3^*, \dots, 1-p_{k-1}^*+p_{k-1}^*p_{k}^*, 1-\frac{\beta}{2}(p_k^*)^\beta \right\} = \pi_n$$
    where $\pi_n = 1-\frac{1}{4\cos^2(\alpha)}$ as seen in equation (\ref{Pi_nDef}).
\end{lemma}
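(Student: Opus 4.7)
The plan is to show that every one of the expressions inside the minimum equals $\pi_n$ exactly, so that the minimum is trivially $\pi_n$. The identity driving the entire argument is the product-to-sum relation
\[
2\sin[(j+1)\alpha]\cos\alpha \;=\; \sin[(j+2)\alpha] + \sin[j\alpha],
\]
which rewrites the definition of $p_j^*$ in the convenient form
\[
p_j^* = \frac{\sin[(j+2)\alpha]}{\sin[(j+2)\alpha] + \sin[j\alpha]}, \qquad 1-p_j^* = \frac{\sin[j\alpha]}{\sin[(j+2)\alpha] + \sin[j\alpha]}.
\]

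For the middle terms $1-p_j^*+p_j^*p_{j+1}^*$ with $j \in \{2,\dots,k-1\}$, I would factor as $1-p_j^*(1-p_{j+1}^*)$. Substituting the original definitions, the $\sin[(j+1)\alpha]$ and $\sin[(j+2)\alpha]$ factors telescope, leaving
\[
p_j^*(1-p_{j+1}^*) \;=\; \frac{\sin[(j+2)\alpha]}{2\sin[(j+1)\alpha]\cos\alpha}\cdot\frac{\sin[(j+1)\alpha]}{2\sin[(j+2)\alpha]\cos\alpha} \;=\; \frac{1}{4\cos^2\alpha}.
\]
Hence every middle term equals $1-\frac{1}{4\cos^2\alpha} = \pi_n$.

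The first and last terms require handling of specific endpoints. For $\frac{1}{2-p_2^*}$, I would show $2-p_2^* = \frac{\sin(4\alpha)+2\sin(2\alpha)}{\sin(4\alpha)+\sin(2\alpha)}$; then the identities $\sin(4\alpha)=2\sin(2\alpha)\cos(2\alpha)$, $1+\cos(2\alpha)=2\cos^2\alpha$, and $2\cos(2\alpha)+1=4\cos^2\alpha-1$ reduce this ratio to $\frac{4\cos^2\alpha}{4\cos^2\alpha-1}$, so that $\frac{1}{2-p_2^*} = \pi_n$. For $1-\frac{\beta}{2}(p_k^*)^\beta$ I split on parity. When $n$ is even ($k=n/2-1$, $\beta=1$), the values $(k+1)\alpha = \tfrac{\pi}{2}-\alpha$ and $(k+2)\alpha=\tfrac{\pi}{2}$ give $\sin[(k+1)\alpha]=\cos\alpha$, $\sin[(k+2)\alpha]=1$, hence $p_k^* = \frac{1}{2\cos^2\alpha}$ and $1-\frac{1}{2}p_k^* = \pi_n$. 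When $n$ is odd ($k=(n-1)/2$, $\beta=2$), the symmetric values $(k+1)\alpha = \tfrac{\pi}{2}-\tfrac{\alpha}{2}$ and $(k+2)\alpha = \tfrac{\pi}{2}+\tfrac{\alpha}{2}$ yield $\sin[(k+1)\alpha]=\sin[(k+2)\alpha]=\cos(\alpha/2)$, giving $p_k^* = \frac{1}{2\cos\alpha}$ and $1-(p_k^*)^2 = \pi_n$.

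Since every member of the set inside the minimum is equal to $\pi_n$, the minimum equals $\pi_n$. The only real obstacle is the first-term computation for $\frac{1}{2-p_2^*}$, which does not fit the uniform telescoping pattern enjoyed by the middle terms and requires the extra double-angle manipulations above; every other case follows directly from the product-to-sum representation of $p_j^*$.
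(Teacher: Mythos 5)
Your proof is correct and takes essentially the same route as the paper: show that each expression inside the minimum individually equals $\pi_n$, handling the first term, the middle telescoping terms, and the parity-split last term in that order. The only divergence is cosmetic---for $\frac{1}{2-p_2^*}$ you route through the product-to-sum rewrite $p_2^* = \sin(4\alpha)/(\sin(4\alpha)+\sin(2\alpha))$ and double-angle identities, whereas the paper expands $\sin(3\alpha)$ and $\sin(4\alpha)$ directly; your version is arguably a touch slicker but the argument is the same.
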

\begin{proof}[Proof of Lemma \ref{U(*)=Pi_n}]
    We will begin by handling the first term in the minimum. Notice
    $$\frac{1}{2-p_2^*} = \frac{2\sin(3\alpha)\cos(\alpha)}{4\sin(3\alpha)\cos(\alpha) - \sin(4\alpha)}.$$
    Recalling $\sin(4\alpha) = 4\sin(\alpha)\cos(\alpha)\left( \cos^2(\alpha) - \sin^2(\alpha) \right)$ and $\sin(3\alpha) = 3\sin(\alpha) - 4\sin^3(\alpha)$, we have
    \begin{align*}
        \frac{1}{2-p_2^*} &= \frac{2\sin(\alpha)\cos(\alpha) \left(3-4\sin^2(\alpha) \right)}{4\sin(\alpha)\cos(\alpha) \left( 3-3\sin^2(\alpha)-\cos^2(\alpha) \right)} \\
        &= \frac{3-4\sin^2(\alpha)}{2\left( 3 - \left(\sin^2(\alpha)+ \cos^2(\alpha) \right) -2\sin^2(\alpha) \right)} \\
        &= 1 - \frac{1}{4-4\sin^2(\alpha)} = 1-\frac{1}{4\cos^2(\alpha)} \\
        &= \pi_n
    \end{align*}
    as desired.
    \\ Now, notice
    $$1-p_j^* = \frac{\sin(j\alpha)}{\sin\left[ (j+1)\alpha \right] \cdot 2\cos(\alpha)}.$$
    We will use this to show $1-p_j^*+p_j^*p_{j+1}^* = \pi_n$ for $j \in \{ 2,3,\dots,k-1 \}$. For such $j$,
    $$1-p_j^*(1-p_{j+1}^*) = 1-\left( \frac{\sin\left[ (j+2)\alpha \right]}{\sin\left[ (j+1)\alpha \right] \cdot 2\cos\alpha} \right) \left( \frac{\sin[(j+1)\alpha]}{\sin[(j+2)\alpha] \cdot 2\cos \alpha} \right) = 1-\frac{1}{4\cos^2(\alpha)} = \pi_n$$
    as desired.
    \\ We will finally handle the last term in the minimum. If $\beta = 1$, then $n$ is even so $k = \frac{n-2}{2}$. Thus,
    $$p_k^* = \frac{\sin\left( \left(\frac{n+2}{2} \right) \alpha \right)}{2\sin \left( \frac{n}{2} \alpha \right) \cos(\alpha)} = \frac{\sin\left( \frac{\pi}{2} \right)}{2\sin \left( \frac{n\pi}{2(n+2)} \right)\cos(\alpha)} = \frac{1}{2\sin\left( \frac{\pi}{2} - \frac{\pi}{n+2} \right) \cos(\alpha)} = \frac{1}{2\cos^2(\alpha)},$$
    so
    $$1-\frac{\beta}{2}(p_k^*)^\beta = 1-\frac{1}{2}p_k^* = 1- \frac{1}{4\cos^2(\alpha)} = \pi_n.$$
    If $\beta = 2$, then $n$ is odd so $k = \frac{n-1}{2}$. Thus,
    $$p_k^* = \frac{\sin\left( \frac{n+3}{2} \alpha\right)}{2\sin\left( \frac{n+1}{2} \alpha \right) \cos(\alpha)} = \frac{\sin\left( \frac{n+3}{n+2} \cdot \frac{\pi}{2} \right)}{2\sin\left( \frac{n+1}{n+2} \cdot \frac{\pi}{2} \right) \cos(\alpha)} = \frac{1}{2\cos(\alpha)},$$
    so
    $$1-\frac{\beta}{2}(p_k^*)^\beta = 1-(p_k^*)^2 = 1-\frac{1}{4\cos^2(\alpha)} = \pi_n$$
    as desired.
\end{proof}

Now that we have proven Lemmas \ref{L(*)<1/2} and \ref{U(*)=Pi_n}, we will proceed with the proof of Lemma \ref{RationalExistenceLem}.
\begin{proof}[\bf{Proof of Lemma \ref{RationalExistenceLem}}]
    It suffices to show existence of rationals $p_2,\dots, p_k \in (0,1)$ such that
    $$L(p_2,\dots,p_k) \leq p \leq U(p_2,\dots,p_k)$$
    for $L,U: [(0,1)\cap \mathbb{Q}]^{k-1} \to \mathbb{R}$ defined as
    \begin{align*}
        L(x_2,x_3,\dots,x_k) &= \max \left\{ \frac{x_2}{1+x_2}, (1-x_2)x_3, \dots, (1-x_{k-1})x_{k}, \frac{\beta}{2}(1-x_k)^\beta \right\} \text{ and}\\
        U(x_2,x_3,\dots,x_k) &= \min \left\{ \frac{1}{2-x_2},1-x_2+x_2x_3, \dots, 1-x_{k-1}+x_{k-1}x_{k}, 1-\frac{\beta}{2}x_k^\beta \right\}.
    \end{align*}
    \\[5mm] First, an overview of how we will prove this: We will define not necessarily rational $p_2^*, p_3^*,\dots, p_k^* \in [0,1]$ such that $L(p_2^*,\dots, p_k^*) < 1/2$ and $U(p_2^*,\dots, p_k^*) = \pi_n$. Then, using continuity of $L,U$ and density of the rationals in the reals, we will guarantee existence of $(p_2,\dots,p_k)$ sufficiently close to $(p_2^*,\dots, p_k^*)$ such that
    $$L(p_2,\dots,p_k) \leq p \leq U(p_2,\dots,p_k).$$
    \\[5mm] Now, we will prove existence of such rationals. To begin, we define $p_j^*$ as is done in the proof of Theorem \ref{Komisarski} by Komisarski \cite{Komisarski2021}. For $j \in \{ 2,3,\dots, k\}$, define
    $$p_j^* = \frac{\sin\left[ (j+2)\alpha \right]}{\sin\left[ (j+1)\alpha \right] \cdot 2\cos(\alpha)}$$
    where $\alpha = \frac{\pi}{n+2}$. Further, define $\Gamma_n = L(p_2^*,\dots,p_k^*)$ to see
    \begin{align*}
        \Gamma_n &= L(p_2^*,\dots,p_k^*) < 1/2 \text{ and} \\
        \pi_n &= U(p_2^*,\dots,p_k^*)
    \end{align*}
    by Lemmas \ref{L(*)<1/2} and \ref{U(*)=Pi_n}. Now, define
    $$\varepsilon = \min\left\{ \frac{\pi_n-p}{2}, \frac{1/2 - \Gamma_n}{2} \right\} > 0.\footnote{The reason we had to handle the $p=\pi_n$ case separately in the proof of Theorem \ref{mainThm} was to ensure that this $\varepsilon$ is strictly positive.}$$
    That way,
    $$L(p_2^*,\dots,p_k^*) + \varepsilon \leq p \leq U(p_2^*,\dots,p_k^*)-\varepsilon.$$
    Then, by continuity of $L$ and $U$, there exists $\delta > 0$ such that, using the euclidean norm,
    \\$\|(x_2,\dots,x_k)-(p_2^*,\dots,p_k^*)\| < \delta$ implies
    \begin{align*}
        |L(x_2,\dots,x_k) - L(p_2^*,\dots,p_k^*)| &< \varepsilon \text{ and} \\
        |U(x_2,\dots,x_k) - U(p_2^*,\dots,p_k^*)| &< \varepsilon.
    \end{align*}
    Recalling that the rationals are dense in the reals and subsequently $\mathbb{Q}^{k-1} \subseteq \mathbb{R}^{k-1}$ is dense, there exists rationals $p_2,\dots,p_k$ such that $\|(p_2,\dots,p_k) - (p_2^*,\dots,p_k^*)\| < \delta$. Consequently,
    \begin{align*}
        |L(p_2,\dots,p_k) - L(p_2^*,\dots,p_k^*)| &< \varepsilon \text{ and} \\
        |U(p_2,\dots,p_k) - U(p_2^*,\dots,p_k^*)| &< \varepsilon.
    \end{align*}
    Thus,
    $$L(p_2,\dots,p_k) < L(p_2^*,\dots,p_k^*) + \varepsilon < p \leq U(p_2^*,\dots,p_k^*) - \varepsilon < U(p_2,\dots,p_k)$$
    as desired.
\end{proof}

\section{Potential Future Work}

Although we have resolved Problem \ref{KKLLSProb}, one is yet to resolve the following problem that D. Kim, R. Kim, Lee, Lim, and So \cite{KKLLS2024} posed.
\begin{problem}\label{futureProb}
    \cite{KKLLS2024} Find all $n$-tuples $(m_1,m_2,\dots,m_n)$ of positive integers such that there exists a BN $n$-tuple of dice $(A_1,A_2,\dots,A_n)$ such that $A_i$ has $m_i$ sides.
\end{problem}
In \cite{SchaeferSolo2017,SchaeferSchweig2017}, Schaefer and Schweig found that for positive integers $n,m \geq 3$, there exists a BN $n$-tuple of dice, each with $m$ sides. In \cite{KKLLS2024}, the $n=3$ case of Problem \ref{futureProb} was fully resolved. Further, in April 2025, a preprint by R. Kim \cite{Kim2025Balanced4Dice} resolves the $n=4$ case. The proofs for the $n=3$ and $n=4$ cases, in \cite{KKLLS2024} and \cite{Kim2025Balanced4Dice} respectively, rely on the least upper bound of the winning probability. Accordingly, after our result, the extension to $n\geq 5$ in Problem \ref{futureProb} may be easier to resolve using similar techniques as the $n=3$ and $n=4$ cases.

\section{Acknowledgements}
This work was supported by the Harvard Center of Mathematical Sciences and Applications, the Harvard College Research Program Summer 2024, and the Harvard University Department of Mathematics. This project heavily benefited from advisement by Dr. Philip Matchett Wood, Dr. Michael Nathanson, Dingding Dong, and Lauren Teichholtz.

\bibliographystyle{plain}
\bibliography{biblio.bib}

\end{document}